\documentclass[12pt,a4paper]{amsart}

\usepackage{amssymb}

\usepackage{graphicx,amssymb,amsfonts,epsfig,amsthm,a4,amsmath,url, enumerate}
\usepackage[latin1]{inputenc}
\usepackage{hyperref}

\usepackage[alphabetic]{amsrefs} 
\usepackage[usenames,dvipsnames]{color}




\newtheorem{thm}{Theorem}[section]
\newtheorem{cor}[thm]{Corollary}
\newtheorem{lem}[thm]{Lemma}

\newtheorem{prop}[thm]{Proposition}

\theoremstyle{definition}

\newtheorem{que}[thm]{Question}

\newtheorem{rem}[thm]{Remark}

\numberwithin{equation}{section}

\newcommand{\N}{\mathbf{N}}
\newcommand{\Z}{\mathbf{Z}}
\newcommand{\R}{\mathbf{R}}
\newcommand{\CC}{\mathbf{C}}
\newcommand{\Q}{\mathbf{Q}}

\newcommand{\GL}{\textnormal{GL}}

\newcommand{\Ker}{\text{Ker}}

\newcommand{\barwr}{\;\bar{\wr}\; }

\begin{document}
\title{On embeddings into compactly generated groups}

\author{Pierre-Emmanuel Caprace}
\address{Universit\'e catholique de Louvain, IRMP, Chemin du Cyclotron 2, 1348 Louvain-la-Neuve, Belgium}
\email{pe.caprace@uclouvain.be}
\thanks{P-E.\ C. was partly supported by FNRS grant F.4520.11 and the European Research Council}

\author{Yves Cornulier}%
\address{Laboratoire de Math\'ematiques\\
B\^atiment 425, Universit\'e Paris-Sud 11\\
91405 Orsay\\FRANCE}
\email{yves.cornulier@math.u-psud.fr}

%

\subjclass[2000]{Primary 22D05; Secondary 05C63, 20E18, 57S30}


\date{November 22, 2012}

\begin{abstract}
We prove that there is a second countable locally compact group that does not embed as a closed subgroup in any compactly generated locally compact group, and discuss various related embedding and non-embedding results.
\end{abstract}

\maketitle

\section{Introduction}
The Higman--Neumann--Neumann Theorem \cite{HNN} ensures that every countable group embeds as a subgroup of a finitely generated group, indeed 2-generated (relying on a fundamental construction referred to since then as \emph{HNN-extension}). This was a major breakthrough, providing some of the first evidence that finitely generated groups are not structurally simpler than countable groups and thus are far from tame or classifiable. Later B.~and H.~Neumann~ \cite{NN}   gave an alternative construction, showing for instance that every countable $k$-solvable group (i.e. solvable of derived length at most $k$) embeds as a subgroup of a finitely generated $(k+2)$-solvable group. Further refinements by Ph.~Hall~\cite{Hall} and P.~Schupp~\cite{Schupp} in a slightly different direction showed that every countable group embeds in a $2$-generated \emph{simple} group. 

In the present paper, we address similar questions in the context of \textbf{locally compact topological groups}, which will be abbreviated henceforth by the term \textbf{l.c.\ groups}.
Recall that locally compact groups are a natural generalization of discrete groups, the counterpart of \emph{countability}, respectively \emph{finite generation}, being \emph{$\sigma$-compactness}, resp.~\emph{compact generation}. 
 A prototypical example of an embedding of a non-compactly generated l.c.~group into a compactly generated one is the embedding of the $p$-adic additive group $\Q_p$ into the affine group $\Q_p\rtimes\Q_p^\times$ (or its discrete cousin, the embedding of the additive underlying group of the ring $\Z[1/p]$ into the Baumslag-Solitar group $\Z[1/p]\rtimes_p\Z$). 

It is natural to ask whether analogues of the HNN Theorem hold in the context of l.c. groups. In that context, an \textbf{embedding} $\varphi \colon H \to G$ of an l.c.~group $H$ to an l.c.~group $G$ is defined as a continuous injective homomorphism (with potentially non-closed image). In the non-discrete setting, several natural  variants of the question can be considered: 
\begin{itemize}
\item Given a $\sigma$-compact l.c. group $H$, is there any embedding $\varphi \colon H \to G$ into a compactly generated l.c.~group $G$? 
\item Is there one with closed image? 
\item Is there one with open image?
\end{itemize}
 It turns out that, whenever the topology on $H$ is non-discrete, the answers to these questions are not always positive, and depend heavily on the algebraic structure of $H$. The results of this note  are intended to illustrate that matter of fact. We start with a positive result in case the algebraic structure of $H$ is the simplest possible, namely $H$ is abelian.

\begin{thm}\label{t:embab}
Every $\sigma$-compact {abelian} l.c.~group $A$ embeds as an open subgroup of a compactly generated group $G$, which can be chosen to be 3-solvable. If moreover $A$ is totally disconnected, respectively second countable, resp.\ both, then $G$ can also be chosen to enjoy the same additional properties.
\end{thm}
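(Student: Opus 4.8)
The plan is to realize $A$ as an open subgroup of a semidirect product $G = B \rtimes H$, where $H$ is a finitely generated metabelian group acting by topological automorphisms on an abelian l.c.\ group $B \supseteq A$. If $A$ is open in $B$ and $H$ is discrete, then $A$ is open in $G$; since $B$ is abelian and $H$ is $2$-solvable, $G$ is $3$-solvable. Compact generation of $G$ will follow once I exhibit a single compact subset $S \subseteq B$ whose $H$-orbit generates $B$ as a group: then $S$ together with a finite generating set $T$ of $H$ generates $G$, because $\langle S \cup T\rangle$ contains $H$ and hence the whole $H$-submodule $\langle H\cdot S\rangle = B$, while $S\cup T$ is compact. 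Thus the problem reduces to manufacturing, for each $A$, a finitely generated metabelian $H$, an abelian over-group $B$ in which $A$ is open, an action, and a compactly generating $H$-orbit; propagating total disconnectedness and second countability is then routine, since all ingredients will be of the same type.

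There are two model mechanisms, corresponding to the two extreme shapes of $A$. The first is the affine mechanism modelled on $\Q_p \hookrightarrow \Q_p\rtimes\Z$: when $A$ has a compact open subgroup $V$ and a topological automorphism $\phi$ with $\phi(V)\supseteq V$ and $\bigcup_n\phi^n(V)=A$, I simply take $B=A$, $H=\langle\phi\rangle\cong\Z$ and $S=V$, so that $G=A\rtimes\Z$ is even metabelian. This handles non-split compact-by-discrete groups like $\Q_p$, where the expanding automorphism already compresses all of $A$ and the module-poor quotient is invisible. The second is the Neumann--Neumann mechanism modelled on the lamplighter $\bigl(\bigoplus_\Z\Z/2\bigr)\rtimes\Z$: when $A$ is discrete its automorphisms are too few to compress it (already $\Q$ or $\bigoplus_p\Z/p$ resist, since a finitely generated module over $\Z[\Z^k]$ has torsion of bounded exponent and denominators supported on only finitely many primes, so commuting automorphisms never suffice). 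One therefore enlarges $A$ inside a wreath-type base on which the non-abelian $\Z\wr\Z$ supplies the missing independent directions; for $A$ discrete this is exactly the Neumann--Neumann embedding quoted in the introduction, which places any countable abelian (hence $1$-solvable) group in a finitely generated $3$-solvable group, and the bound $3=1+2$ is inherited by $G$.

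For a general $\sigma$-compact $A$ I would reduce to these two cases via the structure theory of l.c.\ abelian groups. For a compact symmetric identity neighbourhood $Q$, the open subgroup $A_0=\langle Q\rangle\cong\R^a\times\Z^b\times K$ is compactly generated and $D=A/A_0$ is countable discrete; the factors $\R^a$ and $K$ are already compactly generated and are carried along with the trivial action, so the whole obstruction is concentrated in $D$ and in how it is attached to $A_0$. When the extension $0\to A_0\to A\to D\to 0$ splits one forms $A_0\times G_D$, with $G_D$ a finitely generated $3$-solvable group containing $D$ as an (automatically open) subgroup; then $A=A_0\oplus D$ is open, $A_0\times G_D$ is compactly generated, and its solvable length is at most $3$. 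The non-split case, of which $\Q_p$ is the prototype, is handled by the affine mechanism on the relevant compact-by-discrete pieces, the two being combined inside a single locally compact wreath-type product $A \barwr H$ that keeps $A$ open while finitely generating everything in sight.

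The main obstacle is topological rather than algebraic. The algebra is the classical Neumann--Neumann embedding plus elementary module theory, but one must define the wreath-type base so that it is genuinely locally compact even though $A$ need not have a compact open subgroup (the $\R^a$-factors, and non-split extensions such as $\Q_p$, obstruct the naive restricted product), and one must guarantee that $A$ sits in $G$ as an \emph{open} subgroup rather than a merely continuously injected, topologically thin one, all while preserving compact generation and the bound on solvable length. Producing one construction that does this uniformly---correctly interleaving the expanding-automorphism mechanism for the non-split compact directions with the wreath mechanism for the infinitely generated discrete directions, and checking that total disconnectedness and second countability propagate---is where the real work lies.
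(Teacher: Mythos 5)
There is a genuine gap, and it sits exactly where you locate it yourself: the non-split, non-discrete case. Your two mechanisms do not combine to cover a general $\sigma$-compact abelian $A$. The wreath-type base you invoke cannot be made to work: the restricted power $A^{(H)}$ is locally compact only for $A$ discrete, the unrestricted power $A^{H}$ only for $A$ compact (the paper itself records this), and in neither does a coordinate copy of $A$ sit as an \emph{open} subgroup when $H$ is infinite. The affine mechanism fares no better for the key examples: for the Adeles (the very group the theorem is advertised to handle), no single automorphism $\phi$ satisfies $\bigcup_n\phi^n\bigl(\prod_p\Z_p\bigr)=A$, since any candidate can only introduce denominators at finitely many primes, and more generally there is no finitely generated group of automorphisms whose orbit of a compact set generates $A$. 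So the semidirect-product template $B\rtimes H$ --- which forces you to find a compactly generated group acting on $A$ and ``compressing'' it --- is the wrong frame, and the reduction to it is precisely the unproved step.

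The paper avoids acting on $A$ altogether. By Kakutani--Kodaira, $A$ has a closed cocompact \emph{separable} subgroup, hence there is a homomorphism $f\colon Z\to A$ with $\overline{f(Z)}$ cocompact, where $Z=\Z^{(\omega)}$. Hall's matrix construction over $\Z[t,1/t]$ embeds $Z$ as a \emph{central} subgroup of a finitely generated $3$-solvable group $\Gamma$, and one sets $G=(\Gamma\times A)/F$ with $F$ the graph of $f$, a closed discrete central subgroup. Then $A$ maps onto an open (central) subgroup of $G$, and $G$ is compactly generated because the image of $\Gamma$ is cocompact --- cocompactness, not a generating orbit, is what carries the day. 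This centrality trick is the missing idea in your proposal: it needs no automorphisms of $A$, no splitting of $0\to A_0\to A\to D\to 0$, and no locally compact wreath product, and it makes the totally disconnected and second countable refinements immediate. (Your split-case argument $A_0\times G_D$ is fine as far as it goes, but $\Q_p$ and the Adeles already show the splitting hypothesis fails in the cases that matter.)
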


In particular, the additive group of Adeles, defined as a restricted product of all $\Q_p$ (see Section \ref{adels} for the definition) is isomorphic to an open subgroup of a compactly generated locally compact group. In contrast, the Adeles are used to prove the following result, which shows {in particular} that Theorem~\ref{t:embab} cannot be generalized to solvable groups.

\begin{thm}\label{t:noemb}
There exists a $\sigma$-compact \emph{metabelian} l.c.~group $M$ not isomorphic to any closed subgroup of any compactly generated l.c.~group. 

Moreover $M$ can be chosen to be second countable and  totally disconnected.
\end{thm}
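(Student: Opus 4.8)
The plan is to produce $M$ as an \emph{adelic affine group}. Fix a global field $k$ and let $\mathbb{A}_k$ denote its ring of adeles; set
$$M = \mathbb{A}_k \rtimes k^\times,$$
where $k^\times$ acts on the additive group $\mathbb{A}_k$ by multiplication. This is metabelian and $\sigma$-compact. Taking $k$ to be a function field, say $k=\mathbf{F}_q(t)$, all places are non-archimedean, so $\mathbb{A}_k$ — and hence $M$ — is totally disconnected and second countable; this will yield the ``moreover'' clause, the number field case $k=\Q$ being entirely analogous (with $\mathbb{A}_\Q$ carrying its archimedean factor). First I would record the two adelic inputs that drive everything: the diagonal copy of $k$ is discrete and cocompact in $\mathbb{A}_k$ and is $k^\times$-invariant, so $\Aff(k)=k\rtimes k^\times$ is a cocompact lattice in $M$; and, by the product formula, multiplication by any $x\in k^\times$ preserves Haar measure, so $M$ is unimodular (this rules out the ``cheap'' non-embeddings and pins down the construction). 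Note that $\Aff(k)$ is not finitely generated, since $k^\times$ surjects onto the free abelian group on the places of $k$.

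Next I would set up the contradiction. Suppose $\varphi\colon M\to G$ is a closed embedding into a compactly generated l.c.\ group $G$. After reducing to the case where $G$ is totally disconnected (quotienting by the identity component and checking, via the structure theory of compactly generated groups, that the image of the totally disconnected group $M$ stays closed — a technical point I would isolate as a lemma), I fix a compact open subgroup $U\le G$ and a compact generating set $S\supseteq U$. Then $V:=M\cap U$ is a compact open subgroup of $M$, which after shrinking I may take to be the integral adeles $\prod_v\mathcal{O}_v$. The elementary but crucial transfer identity is that, for $g\in M$, one has $V\cap gVg^{-1}=V\cap gUg^{-1}$, whence $[V:V\cap gVg^{-1}]\le [U:U\cap gUg^{-1}]$. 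Combining this with the structural estimate coming from the Cayley--Abels graph of $G$ — if that graph has valence $\le \kappa$ then $[U:U\cap gUg^{-1}]\le \kappa^{|g|_S}$ — I obtain, for the scaling element $t_v\in k^\times$ at a place $v$, the inequality $q_v^{\deg v}\le \kappa^{|t_v|_S}$, where $q_v^{\deg v}=[V:V\cap t_vVt_v^{-1}]$ is the residue degree contributed at $v$.

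The heart of the matter is that these per-element estimates must be promoted to a genuine obstruction valid for \emph{every} compactly generated overgroup, and this is where I expect the real difficulty. The naive inequalities only force the word lengths $|t_v|_S$ to grow like $\deg v$, which is not yet contradictory. The point to exploit is that $V$ is a \emph{commensurated} compact subgroup of $M$ and that $k^\times$ acts on its commensurability class through the divisor map $x\mapsto (\mathrm{ord}_v(x))_v$, whose image (the principal divisors) is spread across infinitely many places. The plan is to show that a commensurated compact subgroup of a compactly generated group can only be commensurated ``in finitely many independent prime directions'' — concretely, that the valuation/scale data of a compactly generated group factors through finitely much arithmetic — so that the simultaneous commensuration by $k^\times$ at all places of $k$ cannot be realised inside $G$. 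Equivalently, this is the statement that the injective but non-surjective, cocompact endomorphism of $\mathbb{A}_k$ given by multiplication by an ``everywhere uniformizer'' $(\pi_v)_v$ — which fails to be an automorphism precisely because infinitely many places are involved — can never be implemented by conjugation inside a locally compact group. Making this simultaneity precise, presumably through the theory of commensurated subgroups and Schlichting completions, is the main obstacle; once it is in place, the contradiction with the infinitude of places of $k$ is immediate, and the totally disconnected second countable model $k=\mathbf{F}_q(t)$ delivers the full strength of the statement.
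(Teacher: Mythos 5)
There is a genuine gap, and you name it yourself: the entire argument hinges on the assertion that a compact subgroup of a compactly generated l.c.\ group cannot be commensurated ``in infinitely many independent prime directions,'' and you leave this as ``the main obstacle,'' to be handled ``presumably through the theory of commensurated subgroups and Schlichting completions.'' Nothing in the proposal proves it, and the quantitative set-up you do carry out (the Cayley--Abels valence bound $[U:U\cap gUg^{-1}]\le \kappa^{|g|_S}$) is, as you concede, not contradictory on its own: it only forces word lengths to grow. So the proof is not complete. A second, smaller but real, problem is the ``technical point'' of reducing to totally disconnected $G$: the image of a closed totally disconnected subgroup in $G/G^\circ$ need not be closed, and the paper's own remark after the proof exhibits a continuous \emph{injective} (non-closed) homomorphism of its group $M$ into a compactly generated group that factors essentially through connected compact groups --- so the connected part cannot simply be quotiented away and must be confronted directly.

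It is worth recording how the paper closes exactly the gap you identify, because the mechanism is elementary and quite different from scale/commensuration considerations. The group is $M=\mathbb{A}\rtimes\bigoplus_p\Z$, with the $p$-th copy of $\Z$ acting on $\Q_p$ alone by powers of $p$, so that $M$ contains each $B_p=\Q_p\rtimes_p\Z$. Given $f\colon M\to G$, the group $G/G^\circ$ acts properly on a connected graph of some bounded degree $d$, and the key observation (Lemma~\ref{lem:graphs}) is that a pro-$p$ subgroup of a vertex stabiliser with $p>d$ must act trivially, since its action on the $\le d$ neighbours of a fixed vertex is a continuous action of a pro-$p$ group on a set of fewer than $p$ elements. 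A rigidity lemma for $B_p$ (any continuous homomorphism to a totally disconnected group either kills $\Q_p$ or is a closed embedding) then forces $f(\Q_p)$, for all $p>d$, into the connected-by-compact kernel of the graph action, where $\overline{f(\Q_p)}$ must be compact and connected --- incompatible with a closed embedding of the non-compact, totally disconnected group $\Q_p$. Note that this mechanism lives off the fact that the residue characteristics of the local factors are \emph{unbounded}; your preferred totally disconnected model $k=\mathbf{F}_q(t)$ has every $\mathcal{O}_v$ pro-$p$ for the single prime $p=\mathrm{char}(k)$, so the ``large prime versus graph degree'' argument is unavailable there, and you would do better to use the finite adeles of $\Q$ (or the paper's restricted product of all $\Q_p$) to get the second countable totally disconnected example.
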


The proof of Theorem \ref{t:noemb} is based on the now classical observation, due to H.~Abels \cite[Beispiel~5.2]{Abels}, that every compactly generated l.c.~group admits, in a somewhat natural way, a continuous proper action on a connected graph of bounded degree (see Proposition~\ref{prop:lcg} below). Using similar  ideas, we obtain the following result, which shows that the HNN Theorem fails in the non-discrete setting, even if one allows embeddings with potentially non-closed images.

\begin{thm}\label{t:simple}
There exists a second countable (hence $\sigma$-compact), topologically simple totally disconnected l.c.~group $S$, such that every continuous (or even abstract) homomorphism of $S$ to  {any} compactly generated l.c.~group is trivial.
\end{thm}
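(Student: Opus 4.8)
The plan is to combine Abels' proposition (Proposition~\ref{prop:lcg}) with a translation-length argument, and to construct $S$ so that the resulting length function is forced to vanish identically. I would begin by fixing the geometry. Let $G$ be a compactly generated l.c.~group and $\varphi \colon S \to G$ a homomorphism, continuous or abstract. By Proposition~\ref{prop:lcg}, $G$ acts continuously and properly on a connected graph $X$ of bounded degree; composing with $\varphi$ produces an action of $S$ on $X$ by graph automorphisms (merely an abstract action if $\varphi$ is abstract, which is exactly why the conclusion can cover abstract homomorphisms). Equip the vertex set with the path metric $d$ and, for $g \in \Aut(X)$, consider the stable translation length $\tau(g) = \lim_{n} d(g^n v, v)/n$, which is independent of the base vertex $v$. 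The two features I would record are that $\tau$ is \emph{conjugation-invariant} and \emph{homogeneous}, with $\tau(g^n) = |n|\,\tau(g)$, and that, since the $G$-action is proper and $X$ has bounded degree, an element $h \in G$ satisfies $\tau(h) = 0$ if and only if $\overline{\langle h \rangle}$ is compact, equivalently the $\langle h \rangle$-orbits in $X$ are bounded.

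The heart of the matter is to build $S$ to be \emph{uniformly distorted}: every element $s \in S$ should be, inside $S$, conjugate to arbitrarily high powers of itself (or at least a product of a bounded number of such elements). The distortion mechanism is already visible in the Adelic/metabelian example underlying Theorem~\ref{t:noemb}, where the modular action makes the relevant unipotent-type elements conjugate to their powers; the new point is to install this mechanism inside a topologically simple group. Granting it, conjugation-invariance and homogeneity of $\tau$ give $\tau(\varphi(s)) = \tau\bigl(\varphi(s)^{k}\bigr) = k\,\tau(\varphi(s))$ for unbounded $k$, and hence $\tau(\varphi(s)) = 0$ for every $s \in S$. Thus every element of $\varphi(S)$ is elliptic, i.e.\ generates a relatively compact subgroup.

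I would then upgrade this pointwise statement to a global one: the action of $\varphi(S)$ on $X$ has bounded orbits, so that properness of the $G$-action forces $\varphi(S)$ to be relatively compact in $G$. Finally, using that $S$ is topologically simple and \emph{not} compact (indeed not compactly generated, for otherwise $\id \colon S \to S$ would be a non-trivial homomorphism to a compactly generated group), I would conclude that a homomorphism with relatively compact image must be trivial: the closure of its kernel is a closed normal subgroup, hence $\{1\}$ or $S$, and the injective alternative is excluded because a topologically simple non-compact $S$ admits no non-trivial map into a compact group.

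I expect the construction of $S$ itself to be the main obstacle, for two reasons. First, there is a real tension between topological \emph{simplicity}, which typically imposes rigidity and abundance of ``independent'' dynamics, and uniform \emph{distortion}, which demands that every element be conjugate into its own high powers; producing an explicit second-countable totally disconnected l.c.~group reconciling the two (likely as a suitable non-compactly-generated ascending union or as an automorphism group with a built-in contracting/modular symmetry, then passed to a simple subquotient) is the crux. Second, the upgrade from ``every element elliptic'' to ``every orbit bounded'' is delicate, since groups generated by elliptic automorphisms of a graph can act with unbounded orbits; here I would lean on the uniformity of the distortion (a bound on the number of distorted conjugates needed per element) together with the bounded degree of $X$ to control displacement globally.
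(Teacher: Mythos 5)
Your proposal does not prove the theorem: the statement is an existence claim, and the group $S$ is never constructed. You correctly identify the construction as ``the main obstacle'', but that obstacle \emph{is} the theorem. Moreover, the design principle you propose (uniform distortion forcing stable translation lengths to vanish) is neither what the paper uses nor necessary: the paper's group is the Akin--Glasner--Weiss almost-automorphism group $S(u)$ of a disjoint union of complete graphs of unbounded sizes $u_k$, which is an increasing union $\bigcup_n K_n$ of \emph{compact} open subgroups, so every element is elliptic for free in any proper continuous action, with no distortion mechanism required. The actual engine is more local: a vertex stabiliser for a continuous action on a connected graph of degree $\le d$ is open, hence of finite index in the compact open subgroup $K=\prod_k\Sym(u_k)$, hence contains a nontrivial pro-$p$ subgroup for some $p>d$, which must act trivially (Lemma~\ref{lem:graphs}); topological simplicity (Proposition~\ref{gsi}) then kills the whole action. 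Independently of the construction, your route has two further gaps. First, Proposition~\ref{prop:lcg}(ii) applies only to \emph{totally disconnected} compactly generated groups; a general compactly generated l.c.\ group (e.g.\ $\R$) admits no proper continuous action on a connected graph of bounded degree, since vertex stabilisers are open and hence contain $G^\circ$. The connected part must be treated separately, which Proposition~\ref{homtri} does via Yamabe's theorem, the adjoint representation and Pontryagin duality, reducing to the absence of nontrivial continuous maps $S\to\GL_n(\CC)$ (supplied here by the small-subgroups property of $S$); you never address linear representations or connected targets. Second, the upgrade from ``every element elliptic'' to ``bounded orbits'' that you flag is a genuine problem and is left unresolved.

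The treatment of abstract homomorphisms is also incorrect as written. From ``the closure of the kernel is $\{1\}$ or $S$'' you cannot conclude triviality in the dense-kernel case: an abstract homomorphism with dense kernel need not be trivial, and the paper's own Proposition~\ref{qtg} shows that $S(u)$ has an uncountable abstract abelianization and proper subgroups of finite index, hence many nontrivial abstract homomorphisms to compact (even finite) groups. This is exactly why the abstract case (Theorem~\ref{remabs}) requires the stronger hypothesis $\lim u_k=\infty$, is proved for $S(u)^+$ rather than $S(u)$, and rests on automatic-continuity theorems of S.~Thomas for products of finite symmetric groups (subgroups of at most countable index are open), which convert abstract actions on countable-valency graphs and abstract linear representations into continuous ones. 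Likewise, your claim that a topologically simple non-compact group admits no nontrivial continuous homomorphism to a compact group still needs an argument excluding a continuous \emph{injection} with dense, non-closed image; the paper excludes faithful finite-dimensional representations using small subgroups, not non-compactness.
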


This result suggests that, as opposed to the discrete case, compact generation for non-discrete l.c.~groups imposes a strong constraint, making it thus plausible that some of the pathologies that are unavoidable in the general case do not occur. On the other hand, the local compactness is absolutely  essential since  it is known \cite{Pes} that every $\sigma$-compact topological Hausdorff group is isomorphic to a closed subgroup of some compactly generated topological Hausdorff group.

\medskip 
We finally present a result illustrating the difference between embeddings with closed and open images.

\begin{thm}\label{opcl}
There exists a second countable (hence $\sigma$-compact) l.c.~group $H$ which is isomorphic to a {\em closed} subgroup of a compactly generated l.c.~group, but not to any {\em open} subgroup of any compactly generated l.c.~group. 

Moreover, $H$ can be chosen to be of the form $K\rtimes\Gamma$ with $\Gamma$ discrete abelian, and $K$ compact abelian, either connected or profinite. It can also be chosen to be a Lie group.   
\end{thm}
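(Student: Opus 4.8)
The plan is to decouple the two assertions: producing a compactly generated group in which $H$ sits as a \emph{closed} subgroup is a soft, general construction, whereas excluding \emph{open} embeddings is where the arithmetic of the specific $H$ must enter. Throughout I take $H=K\rtimes_\rho\Gamma$ with $K$ compact abelian, $\Gamma$ countable discrete abelian, and $\rho\colon\Gamma\to\Aut(K)$ the defining action. For the closed embedding I would use co-induction: given any embedding $\Gamma\hookrightarrow P$ into a finitely generated group $P$, which exists for every countable $\Gamma$ by the HNN theorem \cite{HNN}, set $\widetilde K=\mathrm{Map}_\Gamma(P,K)$, the group of $\Gamma$-equivariant maps $P\to K$ with $P$ acting by right translation. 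As a closed subgroup of the compact group $K^P$ it is compact abelian, $P$ acts on it continuously, and (by Shapiro's lemma) the identity-coset factor realizes $K$ as a closed $\Gamma$-invariant subgroup of $\widetilde K$ on which $\Gamma$ acts through $\rho$. Hence $G_0:=\widetilde K\rtimes P$ is compactly generated, contains $H=K\rtimes\Gamma$ as a closed subgroup with $K$ compact open in $H$, and the embedding is visibly \emph{not} open since $K$ is not open in $\widetilde K$.

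For the non-existence of open embeddings I would first record the structural reduction in the case $K$ is \emph{connected}, which is clean. Suppose $H$ is open in a compactly generated $G$. Since $H/K\cong\Gamma$ is discrete, $K$ is compact open in $G$; being connected it lies in $G^\circ$, and being open it contains $G^\circ$, so $K=G^\circ$ is normal in $G$. Then $G/K$ is totally disconnected, compactly generated, and contains the discrete open subgroup $H/K\cong\Gamma$, which forces $G/K$ to be discrete and hence finitely generated. Conjugation yields $\bar c\colon G/K\to\Aut(K)$ whose restriction to $\Gamma$ is exactly $\rho$, so $\rho(\Gamma)$ lies in the finitely generated subgroup $\bar c(G/K)$ of $\Aut(K)$. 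Consequently, to preclude open embeddings it suffices to choose $(K,\Gamma,\rho)$ so that $\rho(\Gamma)$ is contained in \emph{no} finitely generated subgroup of $\Aut(K)$.

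Such a choice is available by taking $K=\widehat{\Q}$, the Pontryagin dual of the discrete group $\Q$ (compact, connected, second countable), so that $\Aut(K)=\Aut(\Q)=\Q^\times$, together with $\Gamma=\Q_{>0}^\times\cong\bigoplus_p\Z$ acting by multiplication. Since $\Q_{>0}^\times$ has infinite torsion-free rank while every finitely generated subgroup of the abelian group $\Q^\times$ has finite rank, $\rho(\Gamma)$ is trapped in no finitely generated subgroup; thus $H=\widehat{\Q}\rtimes\Q_{>0}^\times$ is open in no compactly generated group, yet is closed in the $G_0$ built above. A totally disconnected variant is obtained analogously from $K=\prod_p\Z_p=\widehat{\bigoplus_p\Z(p^\infty)}$, whose automorphism group is $\prod_p\Z_p^\times$ (the primary components are characteristic and mutually $\mathrm{Hom}$-orthogonal), with $\Gamma=\bigoplus_p\Z$ acting through infinite-order units $u_p\in\Z_p^\times$ on the $p$-th factor; again $\rho(\Gamma)$ has infinite rank in the abelian group $\Aut(K)$.

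The main obstacle is precisely this profinite case: there $K$ is a compact open subgroup of $G$ that need \emph{not} be normal (now $G^\circ$ may be trivial), so $G/K$ does not exist and the slick reduction above collapses. To handle it I would replace the quotient by the Cayley--Abels graph $X$ on which $G$ acts properly, vertex-transitively, with bounded degree (Proposition~\ref{prop:lcg}). Since $\Gamma$ normalizes $K=\mathrm{Stab}_G(o)$, the infinite orbit $\Gamma\cdot o$ is fixed pointwise by $K$, and $\rho$ is encoded by how $\Gamma$ permutes this fixed set together with the finite local action of $K$ on balls; the work is to extract from compact generation of $G$ a finitely generated constraint incompatible with an infinite-rank $\rho(\Gamma)$. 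The Lie refinement is delicate for a complementary reason: for a compact connected Lie $K=\mathbb{T}^n$ one has $\Aut(K)=\GL_n(\Z)$, which is itself finitely generated and so never obstructs openness, forcing the Lie example to have a \emph{non-compact} identity component and a correspondingly adapted reduction. I expect reconciling these two regimes to be the most technical point.
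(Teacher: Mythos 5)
Your treatment of the connected case is complete and essentially the paper's own: the coinduced group $\mathrm{Map}_\Gamma(P,K)\rtimes P$ is the Krasner--Kaloujnine unrestricted wreath product $K\barwr P$ in slightly different clothing, and your exclusion of open embeddings via the conjugation homomorphism $G\to G/K\to\Aut(\hat{\Q})=\Q^\times$, with open kernel and image containing the infinitely generated $\Lambda$, is exactly the argument the paper runs for its example $H_1=\hat{\Q}\rtimes\Lambda$. This already establishes the first paragraph of the theorem.

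The genuine gap is in the ``Moreover'' clause: the profinite and Lie variants are asserted in the statement, and your proposal stops at an honestly flagged obstacle for both. For the profinite case the missing idea is not the Cayley--Abels graph but the \emph{commensurator}. Take $H_2=\Z_p\rtimes\Lambda$ with $\Lambda$ a countable infinitely generated subgroup of the uncountable group $\Z_p^\times$ (a single prime suffices; you do not need $\prod_p\Z_p$). If $H_2$ is open in $G$, then $\Z_p$ is a compact open subgroup of $G$, hence commensurated: for each $g\in G$ there is $n$ with $g(p^n\Z_p)g^{-1}\subseteq\Z_p$, and conjugation by $g$ on $p^n\Z_p$ is multiplication by a unique scalar $\lambda(g)\in\Q_p^\times$ independent of $n$. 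One checks $\lambda\colon G\to\Q_p^\times$ is a homomorphism restricting to the identity on $\Lambda$ with open kernel (it kills $\Z_p$), so $G/\Ker(\lambda)$ is a discrete abelian group containing $\Lambda$, hence infinitely generated, and $G$ is not compactly generated; normality of $K$ is never needed. For the Lie case you are right that $\mathbb{T}^n$ cannot work, but the fix is painless rather than ``the most technical point'': take $H_3=\R\rtimes\Lambda$ with $\Lambda$ a countable infinitely generated subgroup of $\R^\times$. If $H_3$ is open in $G$, then $G^\circ=H_3^\circ=\R$ is open and normal (openness of the identity component replaces compactness of $K$ in your reduction), and conjugation gives $G\to\Aut(\R)=\R^\times$ with open kernel and image containing $\Lambda$. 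The closed embedding for these two examples is the diagonal one, $\Z_p\rtimes\Lambda\hookrightarrow(\Z_p\rtimes\Z_p^\times)\times\Gamma$ and $\R\rtimes\Lambda\hookrightarrow(\R\rtimes\R^\times)\times\Gamma$ for a finitely generated $\Gamma\supseteq\Lambda$; note your coinduction requires $K$ compact and so does not cover $H_3$.
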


One part of the implication in Theorem~\ref{opcl} is the following general fact, which is based  on a wreath product construction.

\begin{prop}\label{p:compact-by-disc}
Any compact-by-\{countable discrete\} l.c.~group embeds as a closed subgroup in a compact-by-\{finitely generated discrete\} l.c.~group.
\end{prop}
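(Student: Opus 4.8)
The plan is to combine two classical ingredients, one topological and one purely discrete, both organized around the unrestricted permutational wreath product. Write the given group as an extension $1 \to K \to H \xrightarrow{p} \Gamma \to 1$ with $K$ compact and $\Gamma$ countable discrete; since $\Gamma$ is discrete, $K$ is a compact \emph{open} normal subgroup of $H$. First I would embed $H$ into an unrestricted wreath product $K \barwr \Gamma$ whose base is the full direct power $K^\Gamma$ (compact by Tychonoff), via the Kaloujnine--Krasner construction; then I would enlarge the acting group from $\Gamma$ to a finitely generated group $\Lambda \supseteq \Gamma$ furnished by the Higman--Neumann--Neumann theorem, passing to $K \barwr \Lambda$, which is compact-by-\{finitely generated discrete\}.

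For the first step, set $K \barwr \Gamma = K^\Gamma \rtimes \Gamma$, where $\Gamma$ acts on the compact group $K^\Gamma$ by translation of coordinates; this is a locally compact group with compact open normal subgroup $K^\Gamma$ and discrete quotient $\Gamma$. Choosing a section $s \colon \Gamma \to H$ of $p$ with $s(1)=1$ (automatically continuous, as $\Gamma$ is discrete), the Kaloujnine--Krasner theorem provides an injective homomorphism $\Psi \colon H \to K \barwr \Gamma$, $h \mapsto (f_h, p(h))$, whose restriction to $K$ is $k \mapsto \big( (s(\gamma)^{-1} k\, s(\gamma))_{\gamma\in\Gamma},\, 1 \big)$. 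Each coordinate of this map is continuous (it is conjugation in $H$, and lands in $K$ by normality), so $\Psi|_K$ is continuous into the product $K^\Gamma$; being a homomorphism that is continuous on the open subgroup $K$, the map $\Psi$ is then continuous on all of $H$.

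For the second step, the Higman--Neumann--Neumann theorem embeds the countable group $\Gamma$ into a $2$-generated (in particular finitely generated) group $\Lambda$. Extending functions on $\Gamma$ by the value $1$ on $\Lambda \setminus \Gamma$, and using $\Gamma \leq \Lambda$, yields an injective homomorphism $K\barwr\Gamma \to K \barwr \Lambda = K^\Lambda \rtimes \Lambda$; one checks directly that extension-by-$1$ intertwines the $\Gamma$-translation action on $K^\Gamma$ with the restriction to $\Gamma$ of the $\Lambda$-translation action on $K^\Lambda$. The base $K^\Lambda$ is again compact by Tychonoff, and as $\Lambda$ is finitely generated and discrete, $K \barwr \Lambda$ is compact-by-\{finitely generated discrete\}, as required. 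The composite $H \to K\barwr\Gamma \to K \barwr \Lambda$ is then the candidate embedding.

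The remaining and main point is to check that the image is a \emph{closed} subgroup and that these maps are topological embeddings, rather than merely continuous injections. For this I would exploit repeatedly that in a group of the form (compact base)$\rtimes$(discrete), the projection $q$ onto the discrete factor is both continuous and open: the image of $H$ projects onto the subgroup $\Gamma \leq \Lambda$, which is closed since $\Lambda$ is discrete, while the intersection of the image with the compact base is the continuous image of a compact group, hence compact and closed; the full image is therefore a union of compact cosets indexed by the closed discrete set $\Gamma$, and is closed. The same compact-open bookkeeping shows that $\Psi|_K$ is a homeomorphism onto its compact image and that $\Psi$, being open on the open subgroup $K$, is a homeomorphism onto its image. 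I expect this topological verification---ensuring closedness of the image after passing through two wreath products---to be the only delicate part, the algebraic skeleton being supplied essentially verbatim by the Kaloujnine--Krasner and Higman--Neumann--Neumann theorems.
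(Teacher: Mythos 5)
Your proposal is correct and follows essentially the same route as the paper: a topological Krasner--Kaloujnine embedding of $H$ as a closed subgroup of $K\barwr Q$, followed by an HNN embedding of the countable discrete quotient into a finitely generated group, extended to the wreath products by extending functions by the identity. The only (cosmetic) difference is that the paper establishes the closed Krasner--Kaloujnine embedding (Theorem~\ref{embwr}) via commuting simply transitive actions on a space of sections, whereas you verify it directly with the explicit cocycle formula and a compact-open bookkeeping argument for closedness.
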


{Similarly as in Theorem~\ref{t:embab},} this proposition illustrates that embedding theorems can hold in the non-discrete case when the algebraic or topological structure of the group $H$ is not too complicated. 

\bigskip
We finish by mentioning some related natural questions which we have not been able to answer.

\begin{que}Is every second countable (real) Lie group isomorphic to a closed subgroup of a compactly generated locally compact group? of a compactly generated Lie group? Same question for $p$-adic Lie groups.
\end{que}
The answer to the latter questions, with `closed subgroup' replaced by `open subgroup', is negative for both real and $p$-adic Lie groups, see the examples in Section \ref{qha}.

\medskip

\noindent {\bf Acknowledgments.} We thank Pierre de la Harpe for discussions and in particular for pointing out the reference \cite{Pes}.

\section{Locally compact groups, Lie groups and locally finite graphs}

We shall use the following general fact on l.c.~groups; the first part follows from the solution to Hilbert's fifth problem, the second is an elementary but crucial observation due to H.~Abels. 

\begin{prop}\label{prop:lcg}
Let $G$ be an l.c.~group and $V$ be any identity neighbourhood. 

\begin{enumerate}[(i)]
\item{\rm (Yamabe)} If $G$ is connected-by-compact (i.e. if $G/G^\circ$ is compact), then $V$ contains a compact normal subgroup $K$ of $G$ such that $G/K$ is a connected Lie group. 

\item{\rm (Abels)} If $G$ is totally disconnected and compactly generated, then $V$ contains a compact normal subgroup $W$ of $G$ such that $G/W$ admits a faithful, continuous proper vertex-transitive action on some connected locally finite graph.
\end{enumerate}
\end{prop}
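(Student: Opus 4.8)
The plan is to prove the two parts separately, since they concern disjoint structural hypotheses on $G$.

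\medskip

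\noindent\textbf{Part (i).}
For the Yamabe part, I would invoke the deep solution to Hilbert's fifth problem as a black box: the Gleason--Yamabe theorem states that any connected-by-compact l.c.~group $G$ has arbitrarily small compact normal subgroups $K$ with $G/K$ a Lie group. First I would recall this in the precise form that every identity neighbourhood contains such a $K$. The only genuine content to verify is that the quotient Lie group can be taken \emph{connected}. Since $G/G^\circ$ is compact, the group $G/K$ is a connected-by-compact Lie group; its identity component $(G/K)^\circ$ has finite index (as a Lie group with compact group of components has finitely many components). The approach here is to first shrink $K$ so that $G/K$ is Lie, then observe that connectedness of $G$ forces $G/K$ to be connected: indeed the continuous image of a connected group under the quotient map $G \to G/K$ is connected and dense, and in a Lie group a dense connected subgroup generating the group together with compactness of the component group yields that $(G/K)^\circ$ surjects, so one can pass to the further quotient by the (finite, hence the issue is only apparent) component group, or simply note that when $G$ itself is connected the image $G/K$ is automatically connected. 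I expect this part to be routine once Gleason--Yamabe is granted; the only care needed is the bookkeeping around the word ``connected-by-compact'' versus ``connected''.

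\medskip

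\noindent\textbf{Part (ii).}
For the Abels part, let $G$ be totally disconnected and compactly generated. Since $G$ is totally disconnected and locally compact, van Dantzig's theorem provides a compact open subgroup $U$, which I may take inside the given neighbourhood $V$. The plan is to build the graph directly: let $S$ be a compact generating set, which I may enlarge to a symmetric compact set containing $U$, and define a graph $X$ whose vertex set is the coset space $G/U$ and whose edges join $gU$ to $gsU$ for $s$ in $S$. The key points to check are: $X$ is connected because $S$ generates $G$; $X$ is locally finite because $U$ is open and $S$ is compact, so $S$ meets only finitely many cosets of $U$, bounding the degree; and the left $G$-action on $G/U$ is vertex-transitive, continuous, and proper (properness follows from compactness and openness of the vertex stabilizer $U$).

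\medskip

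The remaining and most delicate step is to pass from the action on $G/U$, which has compact-open but possibly nontrivial kernel, to a \emph{faithful} proper action of a quotient $G/W$ with $W$ compact and normal and $W \subseteq V$. I expect this to be the main obstacle. The natural candidate for $W$ is the largest compact normal subgroup of $G$ contained in $U$, or equivalently the kernel of the action of $G$ on a suitable subgraph neighbourhood; concretely one sets $W = \bigcap_{g \in G} g U g^{-1}$ intersected so as to land inside $V$, and one must check that this intersection is both compact (clear, being contained in $U$) and open enough that the quotient graph remains locally finite, while verifying that $G/W$ now acts faithfully. A cleaner route, which I would adopt, is to take for $W$ the kernel of the $G$-action on the vertex set together with all edges, i.e.\ the pointwise stabilizer of a ball, and use total disconnectedness to shrink $U$ so that $W \subseteq V$; the faithfulness of the induced $G/W$-action is then immediate, properness and vertex-transitivity descend, and local finiteness is inherited from the construction on $G/U$.
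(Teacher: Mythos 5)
The paper gives no argument for this proposition at all: part (i) is quoted from Montgomery--Zippin \cite{MZ}*{Th.~IV.4.6} and part (ii) from Abels via \cite{Mon}*{\S 11.3}, so there is no in-text proof to compare against. Measured against the standard proofs, your sketch of (ii) is essentially correct and is precisely the Cayley--Abels construction of the cited references. Two clean-ups: to make the edge relation well defined on the coset space $G/U$ you should replace your compact symmetric generating set $S$ by the (still compact, symmetric) union of double cosets $USU$; and the final step is simpler than you make it. The kernel $W=\bigcap_{g\in G}gUg^{-1}$ of the action on the vertex set is closed, normal, and contained in $U\subseteq V$ (take $g=1$), hence compact; one keeps the \emph{same} graph, so local finiteness, properness and vertex-transitivity need no further discussion, and faithfulness of the induced $G/W$-action is automatic. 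No shrinking of $U$ or ``intersecting so as to land inside $V$'' is required.

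For (i), deferring to Gleason--Yamabe is the intended route, but your treatment of connectedness contains a genuine error: under the stated hypothesis the quotient $G/K$ \emph{cannot} in general be taken connected. For $G=\Z_p$ (compact, hence connected-by-compact) every sufficiently small compact normal subgroup is of the form $p^n\Z_p$ and the quotient is a nontrivial finite group; so the word ``connected'' in the statement is an overstatement, and what Yamabe actually provides is a Lie group $G/K$ whose component group is compact and discrete, hence finite. Your proposed repair of ``passing to the further quotient by the component group'' does not work: the preimage of $(G/K)^\circ$ is a proper open finite-index subgroup of $G$, not a compact normal subgroup, so this does not exhibit $G$ modulo a compact normal subgroup as a connected Lie group. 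The salvageable observation---which you also make, and which is all the paper ever uses (in Proposition~\ref{homtri} the statement is applied to the connected group $G^\circ$, and in Lemma~\ref{qpcc} only ``virtually connected'' is invoked)---is that $G/K$ is automatically connected when $G$ itself is connected, being a continuous image of a connected group. With (i) restated as ``$G/K$ is a Lie group with finitely many components, connected if $G$ is,'' your argument goes through.
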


\begin{proof}
For (i), see \cite{MZ}*{Th.~IV.4.6}. For (ii), originally observed in \cite[Beispiel~5.2]{Abels}, we refer to \cite{Mon}*{\S 11.3}. 
\end{proof}

We deduce the following useful criterion for the non-existence of embeddings into compactly generated l.c.~groups. 

\begin{prop}\label{homtri}
Let $H$ be an l.c.\ group. The following are equivalent
\begin{enumerate}
\item Every continuous homomorphism of $H$ to a compactly generated l.c.~group is trivial
\item\label{c2} The following two conditions are satisfied:
\begin{enumerate}
\item\label{tota} every continuous homomorphism of $H$ to a compactly generated totally disconnected l.c.~group is trivial;
\item\label{nlr} every continuous linear representation $H\to\GL_n(\CC)$ is trivial for all~$n$.
\end{enumerate}
\end{enumerate}
Moreover, a sufficient condition for (\ref{tota}) is that $H$ has no nontrivial continuous action on any connected graph of bounded degree.
\end{prop}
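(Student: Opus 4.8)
The implication $(1)\Rightarrow(2)$ is immediate: condition (\ref{tota}) is the special case of (1) in which the target is totally disconnected, while (\ref{nlr}) is the special case in which the target is $\GL_n(\CC)$, this being a connected Lie group and hence compactly generated. So the content is the converse, and the plan is to assume (\ref{tota}) and (\ref{nlr}), take a continuous homomorphism $\varphi\colon H\to G$ into a compactly generated l.c.\ group $G$, and show $\varphi$ is trivial. First I would peel off the totally disconnected part: $G/G^\circ$ is totally disconnected and, as a continuous quotient of a compactly generated group, is compactly generated, so (\ref{tota}) forces $H\to G\to G/G^\circ$ to be trivial, i.e.\ $\varphi(H)\subseteq G^\circ$. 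Then I would resolve the connected group $G^\circ$ into Lie quotients using Proposition~\ref{prop:lcg}(i): every identity neighbourhood of $G^\circ$ contains a compact normal subgroup $K$ with $G^\circ/K$ a connected Lie group, and as $K$ runs over such subgroups one has $\bigcap_K K=\{1\}$. Hence $\varphi$ is trivial as soon as each induced map $H\to G^\circ/K$ is, and the problem reduces to the claim that every continuous homomorphism $f\colon H\to L$ to a connected Lie group $L$ is trivial.

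To prove this claim I would exploit the adjoint representation. Writing $\mathrm{Ad}\colon L\to\GL(\mathfrak l)$ with $\mathfrak l$ the Lie algebra of $L$ and $n=\dim L$, we have $\GL(\mathfrak l)\cong\GL_n(\R)\subseteq\GL_n(\CC)$, so (\ref{nlr}) makes $\mathrm{Ad}\circ f$ trivial and therefore $f(H)\subseteq\ker(\mathrm{Ad})=Z(L)$. It remains to kill the centre, and this is the main obstacle: a continuous homomorphism into an abelian group is in general \emph{not} detected by finite-dimensional representations, the danger being a large (for instance infinitely generated torsion-free) discrete quotient of $Z(L)$, against which (\ref{nlr}) is powerless.

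The point that removes the obstacle is that $Z(L)$ is \emph{compactly generated}. Indeed $Z(L)^\circ$ is a product of copies of $\R$ and $\R/\Z$, while the component group $\pi_0(Z(L))$ is finitely generated: from the homotopy exact sequence of the fibration $L\to L/Z(L)$ with fibre $Z(L)$ one reads off $\pi_1(L/Z(L))\to\pi_0(Z(L))\to\pi_0(L)=\{1\}$, so $\pi_0(Z(L))$ is a quotient of $\pi_1(L/Z(L))$, which is finitely generated because the connected Lie group $L/Z(L)$ is homotopy equivalent to its maximal compact subgroup. Hence $Z(L)$ is a compactly generated locally compact abelian group, and by the structure theorem $Z(L)\cong\R^a\times(\R/\Z)^b\times\Z^c\times F$ with $F$ finite abelian; this embeds as a closed subgroup of some $\GL_N(\CC)$ (send $\R$ to a unipotent one-parameter subgroup, $\R/\Z$ to the unit circle in $\CC^\times$, $\Z$ to $\R_{>0}$, and $F$ into a product of copies of $\CC^\times$). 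Composing $f$ with this embedding and invoking (\ref{nlr}) once more shows $f$ is trivial, which establishes the claim and with it $(2)\Rightarrow(1)$.

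For the final assertion I would apply Proposition~\ref{prop:lcg}(ii). Given a continuous homomorphism $\psi\colon H\to G$ with $G$ totally disconnected and compactly generated, and any identity neighbourhood $V$, Abels' observation yields a compact normal subgroup $W\subseteq V$ such that $G/W$ acts faithfully, continuously and vertex-transitively on a connected locally finite graph $X$; vertex-transitivity makes $X$ regular, hence of bounded degree. The resulting continuous action of $H$ on $X$ is trivial by hypothesis, and since $G/W\hookrightarrow\Aut(X)$ is injective, $\psi(H)\subseteq W\subseteq V$. Letting $V$ shrink to $\{1\}$ gives $\psi(H)=\{1\}$, which is exactly condition (\ref{tota}); the only subtlety is the passage from ``for each $V$'' to the conclusion, which is legitimate precisely because Abels' $W$ can be chosen inside an arbitrarily small $V$.
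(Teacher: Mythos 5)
Your proof is correct, and its skeleton coincides with the paper's: pass to $G/G^\circ$ via (\ref{tota}), resolve $G^\circ$ into connected Lie quotients via Proposition~\ref{prop:lcg}(i), use the adjoint representation and (\ref{nlr}) to push the image into the centre, and then kill the centre; the final assertion is handled exactly as in the paper, via Proposition~\ref{prop:lcg}(ii) and shrinking neighbourhoods. The one place where you genuinely diverge is the last step. The paper disposes of the centre in one line: by Pontryagin duality an arbitrary locally compact abelian group has enough continuous characters (valued in the circle, hence in $\GL_1(\CC)$) to separate points, so (\ref{nlr}) already forbids any nontrivial continuous homomorphism of $H$ to any abelian l.c.\ group. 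You instead prove that $Z(L)$ is a compactly generated abelian Lie group --- using the homotopy exact sequence of the principal bundle $L\to L/Z(L)$ and the Cartan--Malcev--Iwasawa retraction to see that $\pi_0(Z(L))$ is finitely generated --- and then embed $\R^a\times(\R/\Z)^b\times\Z^c\times F$ explicitly into some $\GL_N(\CC)$. This is valid (and the closedness of that embedding is not even needed; injectivity suffices), but it is more work than necessary and less robust: the duality argument requires no structural information about $Z(L)$ at all. Your reformulation of the reduction step via $\bigcap_K K=\{1\}$ rather than the paper's argument by contradiction with a single well-chosen neighbourhood is an immaterial variation.
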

\begin{proof}
One implication is trivial. Assume that (\ref{c2}) holds. Let $G$ be a compactly generated l.c.\ group and $f \colon H \to G$ be a continuous homomorphism. Considering the composite map $H\to G\to G/G^\circ$ and in view of (\ref{tota}), we see that $f(H)\subset G^\circ$. If $f$ is not the trivial map, some identity neighbourhood $V$   in $G$ does not contain the image of $f$. 
By Proposition~\ref{prop:lcg}(i), there is a compact normal subgroup $K$ of $G^\circ$ contained in $V$ such that $L=G^\circ/K$ is a (connected) Lie group. So the composite map $H\to L$ is non-trivial. Using the adjoint representation of $L$ and (\ref{nlr}), we see that it maps $H$ into the center of $L$. On the other hand, it follows from Pontryagin duality and (\ref{nlr}) that $H$ admits no nontrivial continuous homomorphism to any abelian l.c.\ group. So we get a contradiction, and thus $f$ is the trivial homomorphism.

Let us now assume that $H$ has no nontrivial continuous action on any connected graph of bounded degree   and let us check that (\ref{tota}) holds. Let $f\colon H\to G$ be a continuous homomorphism, with $G$ a compactly generated, totally disconnected l.c.~group. If $f$ is non-trivial,  some identity neighbourhood $V$   in $G$ does not contain the image of $f$. By Proposition~\ref{prop:lcg}(ii), there is a compact normal subgroup $K$ of $G$ contained in $V$ such that $G/K$ acts continuously, faithfully, vertex-transitively on a connected locally finite graph. The hypothesis made on $H$ implies that the restriction of this action to $H$ is trivial. Thus $f(H)$ is contained in $K$, hence in $V$, which is a contradiction.\end{proof}

\begin{rem}
We do not know if, conversely, (\ref{tota}) implies that $H$ has no nontrivial continuous action on any connected graph of bounded degree. In other words, does the existence of a continuous non-trivial action on a connected graph of bounded degree imply the existence of such an action on a vertex-transitive graph? The same question, replacing ``non-trivial" by ``proper", can also naturally be addressed.
\end{rem}

Finally, we record an elementary fact, allowing us in suitable situations to exclude actions on some connected locally finite graphs. 

\begin{lem}\label{lem:graphs}
Let $G$ be an l.c.~group acting continuously by automorphisms on a connected graph all of whose vertices have degree~$\leq d$. Then every vertex stabiliser $O$ is open in $G$ and, for any prime $p>d$, every closed pro-$p$ subgroup of $O$  acts trivially on the graph. In particular, if $G$ admits an open pro-$p$-group, then the action has an open kernel.
\end{lem}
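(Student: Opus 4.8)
The plan is to establish the three assertions in turn, each feeding into the next. First I would make the openness of vertex stabilisers precise. Since the graph is a discrete combinatorial object, continuity of the action means that for each vertex $v$ the orbit map $G\to V$, $g\mapsto g\cdot v$, is continuous when the vertex set $V$ carries the discrete topology. The stabiliser $O=\{g\in G: g\cdot v=v\}$ is then the preimage of the clopen singleton $\{v\}$, hence open in $G$. This part is routine.

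Next I would treat the central claim: for $p>d$, a closed pro-$p$ subgroup $P\leq O$ acts trivially. The key elementary observation is that a finite $p$-group acting on a set of cardinality $<p$ must fix it pointwise, since every orbit has size a power of $p$ and the only power of $p$ smaller than $p$ is $1$. I would apply this as follows. The group $P$ fixes $v$ and permutes its neighbours, of which there are at most $d<p$. By the first part, the stabiliser in $P$ of each neighbour is open, so the resulting action of $P$ on this finite set factors through a finite quotient of $P$, which is a finite $p$-group; the observation then forces $P$ to fix every neighbour of $v$. Propagating this along edges by induction on the distance to $v$, and using connectedness of the graph to reach every vertex, shows that $P$ fixes all vertices, hence acts trivially (an automorphism fixing every vertex fixes every edge).

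Finally, for the ``in particular'' statement I would fix a prime $p>d$ and an open pro-$p$ subgroup $U\leq G$. Intersecting with a vertex stabiliser $O$, which is open by the first part, yields an open subgroup $U\cap O$; being an open, hence finite-index, subgroup of the profinite group $U$, it is itself a closed pro-$p$ subgroup of $O$. By the second part it acts trivially, so the kernel of the action contains the open subgroup $U\cap O$ and is therefore itself open.

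I expect no serious obstacle here: the lemma is essentially a packaging of the standard ``$p$-groups fix small sets'' principle together with the openness of stabilisers and a connectedness argument. The step requiring the most care is the middle one, where one must combine the continuity input (to ensure the finite action on a neighbourhood factors through a finite $p$-group) with the inductive propagation of the fixed-point property across the connected graph.
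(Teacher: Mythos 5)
Your proposal is correct and follows essentially the same route as the paper: both rest on the observation that a pro-$p$ group acting continuously on a set of at most $d<p$ elements must act trivially, applied to the neighbours of a fixed vertex, with connectedness used to propagate the fixed-point property (the paper phrases this contrapositively, via an edge from a fixed vertex to a non-fixed one, while you run a direct induction on distance, but it is the same idea). The openness of stabilisers and the deduction of the final clause are handled as in the paper.
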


\begin{proof}
Let $O$ be a vertex stabiliser, which is open in $G$ since the action on the graph is assumed continuous. Given any closed subgroup $H$ of $O$ which acts non-trivially on the graph, there is a vertex $v$ fixed by $H$ and adjacent to some vertex which is not fixed by $H$. In particular $H$  admits some non-trivial continuous permutation action on the set of neighbours of $v$, which is  a set of at most $d$ elements. It follows that $H$ cannot be pro-$p$ for any $p>d$. 
\end{proof}

\section{Proof of Theorem \ref{t:embab}}

Recall that any totally disconnected l.c.~group contains compact open subgroups.  {Moreover every abelian l.c.~group $A$ has a (non-canonical) decomposition as a topological direct product $\R^n\times W$, where $W$ is compact-by-discrete,} and the discrete quotient is countable as soon as $A$ is $\sigma$-compact. Those facts could be used to deduce (a part of) Theorem~\ref{t:embab} from Proposition~\ref{p:compact-by-disc}. This is however not what we shall do here, and present rather a simpler direct argument.

\medskip
We begin with an easy classical result.

\begin{lem}\label{Zgamma}
There exists a finitely generated group $\Gamma$ whose center contains a free abelian group $Z$ of countable rank; $\Gamma$ can be chosen to be 3-solvable.
\end{lem}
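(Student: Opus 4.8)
The plan is to construct a finitely generated 3-solvable group $\Gamma$ whose center contains a free abelian group of countable rank. The natural strategy is to realize $Z$ inside the center via a central extension built from a suitable module. Concretely, I would look for a finitely generated metabelian group $Q$ acting on a free abelian group $M$ in such a way that the group of coinvariants, or better a carefully chosen central subgroup of a central extension of $Q$ by a $\Z Q$-module, is free abelian of infinite rank. The guiding principle is that finite generation of $\Gamma$ only requires that the module be finitely generated \emph{as a $\Z Q$-module}, not as an abelian group, so one can exploit the action of $Q$ to produce infinitely many independent central elements from finitely many generators.

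First I would set up the base group as something like $\Gamma_0 = \Z[1/p] \rtimes \Z$ (a Baumslag--Solitar-type metabelian, hence 2-solvable, finitely generated group), or more flexibly $\Gamma_0 = \Z[\Z] = \Z[t,t^{-1}]$ viewed additively with $\Z$ acting by the shift $t$. Then I would form a central extension
\[
1 \to Z \to \Gamma \to \Gamma_0 \to 1
\]
where $Z$ is engineered to be a free abelian group of countable rank sitting inside the center. The cleanest way to guarantee centrality together with finite generation is to take $\Gamma$ to be generated by finitely many elements whose iterated commutators, under conjugation by the generators of $\Gamma_0$, sweep out a free abelian group. A concrete model: let $\Gamma$ be the group of upper unitriangular-type matrices, or a quotient of a free nilpotent-by-abelian group, chosen so that the relevant commutators $[a, t^n]$ for $n \in \Z$ are central and $\Z$-linearly independent.

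The key steps, in order, are: (1) exhibit an explicit finitely generated group $\Gamma$, for instance as a semidirect or central-extension construction over $\Gamma_0$; (2) identify an infinite family of central elements $z_n$ ($n \in \N$) and verify they are genuinely central; (3) prove $\Z$-linear independence of the $z_n$, which shows $Z = \bigoplus_n \Z z_n$ is free abelian of countable rank; and (4) check that the derived length of $\Gamma$ is at most $3$. Step (4) is controlled because $Z$ is central (hence contributes to the abelianization structure at the bottom), so if $\Gamma_0$ is $2$-solvable then $\Gamma$, being a central extension of a $2$-solvable group, is $3$-solvable: the derived series satisfies $\Gamma'' \subseteq Z \cdot (\text{lifts})$ and $\Gamma''' $ is trivial once one checks that $\Gamma''$ lands in the center.

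The main obstacle I expect is step (3), proving genuine $\Z$-linear independence of the central elements rather than accidental relations collapsing the rank. This is where the arithmetic of the chosen module matters: one must verify that the module $M$, as a $\Z\Gamma_0$-module, is free (or at least has the property that its submodule of interest is free abelian of infinite rank over $\Z$), which typically reduces to a statement about linear independence of shifts $\{t^n\}$ in a Laurent polynomial ring or in a group algebra. Choosing $\Gamma_0 = \Z$ acting by shift and $M = \Z[\Z]$ makes this independence transparent, since $\Z[t,t^{-1}]$ is free of countable rank over $\Z$ with basis $\{t^n\}$; the remaining care is purely in confirming that this module sits centrally inside a finitely generated $3$-solvable group, which the central extension framework delivers.
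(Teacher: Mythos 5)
Your proposal is correct and follows essentially the same route as the paper, which uses P.~Hall's construction: a group of $3\times 3$ matrices over $\Z[t,t^{-1}]$ generated by the diagonal matrix $\mathrm{diag}(1,t,1)$ and the two elementary unitriangular generators, whose iterated commutators sweep out the central subgroup $\{e_{13}(P): P\in\Z[t,t^{-1}]\}\cong\Z[t,t^{-1}]$, free abelian of countable rank, with the derived length bound coming exactly from your observation that the group is a central extension of a metabelian group. The only thing missing is the final commitment to one explicit set of generators, but every ingredient you list (finite generation as a $\Z Q$-module, linear independence of the shifts $t^n$, centrality of the $(1,3)$-entries) matches the paper's argument.
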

\begin{proof}
If $t$ is an indeterminate, the reader can check that the three matrices
$$\begin{pmatrix} 1 & 0 & 0\\ 0 & t & 0\\ 0 & 0 & 1\end{pmatrix},\;\begin{pmatrix} 1 & 1 & 0\\ 0 & 1 & 0\\ 0 & 0 & 1\end{pmatrix},\;\begin{pmatrix} 1 & 0 & 0\\ 0 & 1 & 1\\ 0 & 0 & 1\end{pmatrix}$$
generates a group containing the set of all matrices 
of the form $$\begin{pmatrix} 1 & 0 & P(t)\\ 0 & 1 & 0\\ 0 & 0 & 1\end{pmatrix},\;P(t)\in\Z[t,1/t] $$
as a central,  {infinitely generated subgroup. (This construction is due to P.\ Hall \cite{Hall}*{Theorem~7}.)}
\end{proof}

\begin{lem}\label{cocse}
If $G$ is a $\sigma$-compact l.c.~group, then it has a cocompact closed separable subgroup.
\end{lem}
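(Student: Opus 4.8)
The plan is to reduce separability to the existence of a second countable quotient and then lift a countable dense set. The key tool is the Kakutani--Kodaira theorem: since $G$ is $\sigma$-compact and locally compact, it admits a compact normal subgroup $N$ such that $G/N$ is metrizable; being also $\sigma$-compact, $G/N$ is then second countable, and in particular separable.

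Granting this, I would first fix a countable dense subset $\{\bar d_i : i\in\N\}$ of $G/N$ and choose a lift $d_i\in G$ of each $\bar d_i$. I then set $H:=\overline{\langle d_i : i\in\N\rangle}$, the closed subgroup generated by these lifts. This $H$ is separable: the abstract subgroup $\langle d_i : i\in\N\rangle$ is countable, being generated by countably many elements, and is dense in $H$ by construction, so it is the required countable dense subset.

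It remains to check that $H$ is cocompact. First, $HN$ is closed, since $H$ is closed and $N$ is compact; hence $HN/N$ is a closed subgroup of $G/N$ containing the dense set $\{\bar d_i\}$, so $HN/N=G/N$, that is $HN=G$. The delicate point, namely guaranteeing genuine cocompactness rather than merely a surjection onto $G/N$, is then resolved by the compactness of $N$: since $G=NH$, every coset $gH$ equals $nH$ for some $n\in N$, so the continuous map $N\to G/H$, $n\mapsto nH$, is surjective, exhibiting $G/H$ as a continuous image of the compact group $N$, and hence compact.

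I expect the only genuine input to be the Kakutani--Kodaira theorem producing the compact normal subgroup with second countable quotient; the lifting and the cocompactness verification are then formal. One point worth flagging is that $H$ need not itself be second countable or metrizable, since $N\cap H$ may fail to be separable, but this is harmless: the statement only asks for a countable dense subset, which the countable subgroup $\langle d_i : i\in\N\rangle$ supplies.
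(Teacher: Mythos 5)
Your proof is correct and follows essentially the same route as the paper: apply Kakutani--Kodaira to get a compact normal subgroup with second countable quotient, lift a countable dense subset, and take the closed subgroup it generates, with cocompactness coming from the compactness of the normal subgroup. The only cosmetic difference is that you spell out the cocompactness verification (via the surjection $N\to G/H$) where the paper simply writes $G=\overline{KD}=K\overline{D}$.
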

\begin{proof}
By the Kakutani--Kodaira Theorem, there is a compact normal subgroup $K$ such that $G/K$ is second countable. So $G/K$ admits a dense countable subset. Lift this subset to $G$, and let $D$ be the abstract (countable) group it generates. So $G = \overline{KD} = K\overline{D}$ since $K$ is compact. Thus $\overline D$ is cocompact; moreover it is separable by construction.
\end{proof}

\begin{proof}[Proof of Theorem \ref{t:embab}]
By Lemma \ref{cocse}, there is a cocompact closed separable subgroup in $A$. In other words, there is a homomorphism $f \colon Z\to A$ whose image has cocompact closure, where $Z=\Z^{(\omega)}$ is the restricted product of countably many copies of the infinite cyclic group. In view of Lemma~\ref{Zgamma}, the group $Z$ can be embedded as a central subgroup of a finitely generated group $\Gamma$ (which can be chosen to be 3-solvable). The graph $F$ of $f$ is a closed discrete central subgroup of $\Gamma\times A$. Since $f$ is injective, it follows that the mapping of $A$ into $G=(\Gamma\times A)/F$ is injective. 
Moreover $A$ has  open image (because the quotient map is open). So $A$ lies as an open (and central) subgroup of $G$. The latter group is compactly generated: indeed, the closure of the subgroup generated by a finite generating subset of $\Gamma$ is cocompact.
By construction, if $A$ is second countable, resp.\ totally disconnected, then so is $G$. 
\end{proof}

\section{Proof of Theorem \ref{t:noemb}}\label{adels}

Consider $B_p=\Q_p\rtimes_p\Z$, where the notation $\rtimes_p$ means that the $\Z$-action is through multiplication by powers of $p$. Let $A$ be the group of Adeles, namely the set of elements in $\prod_p\Q_p$ ($p$ ranging over all primes) whose projection in $\prod\Q_p/\Z_p$ is finitely supported, endowed with the ring topology for which $\prod\Z_p$ is a compact open subring. In the product $\prod_p B_p=(\prod_p\Q_p)\rtimes\prod_p\Z$, consider the subgroup $Z=\bigoplus_p\Z$, and endow it with the discrete topology. Finally define $M=A\rtimes Z\subset\prod_p B_p$. The group $M$ is metabelian admits a unique Hausdorff group topology for which $\prod_p\Z_p$ is a  compact open subgroup. In particular $M$ is locally compact.

Theorem~\ref{t:noemb} is a consequence of the following.

\begin{prop}\label{mainprop}
There is no embedding of $  M$ as a closed subgroup of any compactly generated l.c.~group. 

More precisely, given any continuous   homomorphism $f \colon M \to G$ to a compactly generated l.c.~group $G$,  there exists $p_0$ such that $\overline{f(\Q_p)}$ is a compact connected group  for all $p\ge p_0$.
\end{prop}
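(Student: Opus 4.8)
Throughout, write $u\in Z$ for the generator of the $p$-th copy of $\Z$, acting on $\Q_p$ by multiplication by $p^{-1}$, so that inside $M$ one has the Baumslag--Solitar relation $u^n\Z_p u^{-n}=p^{-n}\Z_p$ and hence $\Q_p=\bigcup_{n\ge 0}u^n\Z_p u^{-n}$, an increasing union of compact pro-$p$ groups. The first assertion follows from the ``more precisely'' part: if $f$ were an isomorphism onto a closed subgroup, then $\Q_p$ being closed and non-compact in $M$, the open mapping theorem would force $\overline{f(\Q_p)}=f(\Q_p)\cong\Q_p$ to be closed and non-compact, contradicting the compactness conclusion. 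The engine of the whole argument is the following elementary \emph{collapse}: if $\phi\colon M\to Q$ is any homomorphism into a group $Q$ with $P:=\phi(\Z_p)$ finite, then $\phi(\Q_p)=P$. Indeed $\phi(p^{-n}\Z_p)=\phi(u)^nP\phi(u)^{-n}$ is finite of cardinality $|P|$, while $\Z_p\subseteq p^{-1}\Z_p$ gives $P\subseteq\phi(u)P\phi(u)^{-1}$; equality of finite sets of the same cardinality forces $\phi(u)P\phi(u)^{-1}=P$, whence $\phi(p^{-n}\Z_p)=P$ for every $n$ and $\phi(\Q_p)=P$. Note that the character examples showing $\overline{\chi(\Q_p)}$ can be an infinite torus do \emph{not} extend to $M$, since $[u,x]=(p^{-1}-1)x$ forces any homomorphism from $M$ to an abelian group to kill $\Q_p$; the collapse quantifies this rigidity.

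The plan is first to control the totally disconnected direction. Let $\pi\colon G\to D:=G/G^\circ$ and put $g=\pi\circ f$. Since $D$ is compactly generated and totally disconnected, Proposition~\ref{prop:lcg}(ii) provides a compact normal subgroup $W\trianglelefteq D$ such that $D/W$ acts faithfully, continuously, properly and vertex-transitively on a connected locally finite graph $X$; let $d$ bound the degrees and take $p_0$ to be the least prime exceeding $d$ (these data depend only on $D$, not on $p$). Write $\bar g$ for the composite $M\to D\to D/W$. For $p\ge p_0$ the image $\bar g(\Z_p)$ is the continuous image of the profinite group $\Z_p$, hence a compact pro-$p$ subgroup $C$ of $\Aut(X)$. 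Vertex stabilisers being open by Lemma~\ref{lem:graphs}, $C$ has finite orbits; picking one orbit $Y$, its pointwise stabiliser $C_{(Y)}$ is an open normal (hence closed pro-$p$) subgroup of $C$ contained in a single vertex stabiliser, so Lemma~\ref{lem:graphs} (using $p>d$) makes it act trivially, and by faithfulness $C_{(Y)}=1$. Thus $C\cong C/C_{(Y)}$ embeds in $\Sym(Y)$ and is \emph{finite}. The collapse then gives $\bar g(\Q_p)=\bar g(\Z_p)$ finite, so $g(\Q_p)$ is relatively compact in $D$ with $\overline{g(\Q_p)}$ profinite. Consequently $f(\Q_p)\subseteq G_2:=\pi^{-1}\big(\overline{g(\Q_p)}\big)$, a closed subgroup with $G_2/G^\circ=\overline{g(\Q_p)}$ compact, i.e.\ $G_2$ is connected-by-compact.

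Next I treat the connected direction inside $G_2$. By Yamabe (Proposition~\ref{prop:lcg}(i)) there is a compact normal subgroup $K\trianglelefteq G_2$ with $L:=G_2/K$ a connected Lie group; let $\psi\colon\Q_p\to L$ be induced by $f$. Once more $\psi(\Z_p)$ is the continuous image of the profinite group $\Z_p$, hence profinite, and a profinite subgroup of a Lie group is finite (no small subgroups); so $\psi(\Z_p)$ is finite. The collapse yields $\psi(\Q_p)=\psi(\Z_p)$ finite, so $f(\Q_p)$ lies in the preimage under $G_2\to L$ of a finite set, a finite union of cosets of the compact group $K$, hence relatively compact. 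Therefore $\overline{f(\Q_p)}$ is compact for all $p\ge p_0$.

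Finally, $\overline{f(\Q_p)}$ is a compact \emph{abelian} group (the closure of the abelian group $f(\Q_p)$) in which $f(\Q_p)$ is dense, so restriction of characters embeds its Pontryagin dual into $\widehat{\Q_p}\cong\Q_p$, which is torsion-free; a compact abelian group whose dual is torsion-free is connected. Hence $\overline{f(\Q_p)}$ is compact and connected, as required. The step I expect to be the main obstacle is the finiteness of $C=\bar g(\Z_p)$ in the second paragraph: a compact pro-$p$ group need not fix a vertex of $X$, so Lemma~\ref{lem:graphs} cannot be applied to $C$ directly, and it is precisely the passage to the pointwise stabiliser of a finite orbit, combined with faithfulness, that reduces $C$ to a finite group and lets the collapse take over.
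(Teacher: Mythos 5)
Your totally disconnected step is correct, and it takes a genuinely different (and arguably cleaner) route than the paper: the purely algebraic ``collapse'' exploits the relation $u\Z_p u^{-1}\supseteq\Z_p$ in the same way that the paper's Lemmas~\ref{qptd} and~\ref{imap2} exploit properness, and your finite-orbit argument correctly deduces that $\bar{g}(\Z_p)$, hence $\bar{g}(\Q_p)$, is finite for $p>d$, so that $f(\Q_p)$ lands in a connected-by-compact closed subgroup $G_2$. The gap is in the connected step. The collapse is a statement about homomorphisms defined on $B_p=\Q_p\rtimes_p\Z$ (its proof needs $\phi(u)$ to conjugate with), whereas $\psi$ is only defined on $\Q_p$: the element $f(u)$ need not lie in $G_2$, and although it does normalise $G_2$ (it normalises $f(\Q_p)$, hence $\overline{g(\Q_p)}$, hence $\pi^{-1}(\overline{g(\Q_p)})$), the Yamabe subgroup $K\trianglelefteq G_2$ is not canonical and need not be normalised by $f(u)$, so conjugation by $f(u)$ does not descend to $L=G_2/K$. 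Without the conjugating element, ``$\psi(\Z_p)$ finite $\Rightarrow\psi(\Q_p)$ finite'' is simply false: a character $\Q_p\to\R/\Z$ with dense image sends $\Z_p$ to a finite group but $\Q_p$ to an infinite dense subgroup --- exactly the phenomenon your own parenthetical remark about characters records. So, as written, the finiteness of $\psi(\Q_p)$ and hence the compactness of $\overline{f(\Q_p)}$ are not established.

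The gap is repairable in at least two ways. First, one can take $K$ to be the maximal compact normal subgroup of the connected-by-compact group $G_2$ (it exists and $G_2/K$ is a virtually connected Lie group, as used in the paper's proof of Lemma~\ref{qpcc}); being topologically characteristic, it is normalised by $f(u)$, so the collapse does apply to $B_p\to\langle G_2,f(u)\rangle/K$. Second, one can do what the paper does: drop $u$ from this step and show directly that a continuous homomorphism $\Q_p\to L$ to a virtually connected Lie group has relatively compact image whose closure is a torus, using that a closed abelian subgroup of a connected Lie group has the form $\Gamma\times\R^k\times T$, together with the divisibility of $\Q_p$ (no nontrivial finite quotients) and the no-small-subgroups property --- this is precisely Lemma~\ref{qpcc}. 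Your closing Pontryagin-duality argument for connectedness is correct and is a nice alternative to the paper's profinite-quotient argument, but it only applies once compactness has been secured.
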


We begin by two lemmas on homomorphisms from $\Q_p$ into locally compact groups.

\begin{lem}\label{qpcc}
For every continuous homomorphism $f \colon \Q_p \to G$ of $\Q_p$ to a connected-by-compact l.c.~group $G$, the closure of the image $\overline{f(\Q_p)}$ is compact and connected.
\end{lem}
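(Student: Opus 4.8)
The plan is to prove compactness of $\overline{f(\Q_p)}$ first, and then deduce connectedness from it essentially for free. The compactness statement will be reduced, via Yamabe's theorem (Proposition~\ref{prop:lcg}(i)), to the case where the target is a connected Lie group, where the image turns out to be a torsion subgroup that I must show has relatively compact closure.

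First observe that $f(\Z_p)$ is automatically compact, being the continuous image of the compact group $\Z_p$; the whole difficulty lies in controlling $f$ on $\Q_p=\bigcup_m p^{-m}\Z_p$, an increasing union of compact subgroups $f(p^{-m}\Z_p)$ whose union $f(\Q_p)$ need not a priori be relatively compact. To handle this, fix a compact identity neighbourhood $V$ in $G$. Since $G$ is connected-by-compact, Proposition~\ref{prop:lcg}(i) yields a compact normal subgroup $K\subseteq V$ with $L=G/K$ a connected Lie group. Writing $q\colon G\to L$ and $g=q\circ f$, and using that $q$ is a proper map because $K$ is compact, it suffices to prove that $\overline{g(\Q_p)}$ is compact in $L$: then $\overline{f(\Q_p)}$ is a closed subset of the compact set $q^{-1}(\overline{g(\Q_p)})$, hence compact.

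The heart of the argument is the Lie group case. As $L$ has no small subgroups, some identity neighbourhood $U$ contains no nontrivial subgroup; continuity of $g$ gives $p^n\Z_p\subseteq g^{-1}(U)$ for some $n$, so that $g(p^n\Z_p)$ is a subgroup inside $U$ and therefore trivial. Thus $g$ factors through the discrete quotient $\Q_p/p^n\Z_p\cong\Z(p^\infty)$, and $A:=g(\Q_p)$ is an abelian torsion subgroup of $L$ (isomorphic to $\Z(p^\infty)$ or trivial). I then want $H:=\overline{A}$ to be compact; it is a closed abelian subgroup, hence an abelian Lie group with $H^\circ\cong\R^a\times(\R/\Z)^b$. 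The vector part must vanish: the maximal compact subgroup $(\R/\Z)^b$ of $H^\circ$ is characteristic, hence normal, and the torsion subgroup of $H/(\R/\Z)^b$ meets its identity component $\R^a$ only in the origin, so it is discrete; since $A$ is dense and torsion, this forces $a=0$, i.e.\ $H^\circ$ is a torus. To exclude that $H$ has infinitely many components, I would invoke the adjoint representation $\mathrm{Ad}\colon L\to\GL(\g)$: the image $\mathrm{Ad}(A)$ is a commuting family of finite-order, hence semisimple, elements of $\GL(\g)$ with root-of-unity eigenvalues, so it is simultaneously diagonalisable over $\CC$ and therefore generates a relatively compact subgroup; thus $\mathrm{Ad}(H)$ is relatively compact. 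As $\ker(\mathrm{Ad})=Z(L)$ for connected $L$, this shows $H$ is compact modulo its intersection with the centre, and the residual analysis takes place in the abelian Lie group $Z(L)$, where the same dichotomy (torus part compact, vector part excluded by the density argument) applies, to be closed by dévissage on $\dim L$. This step — ruling out escape to infinity through infinitely many connected components, i.e.\ proving full compactness of $\overline{g(\Q_p)}$ rather than mere compactness of its identity component — is the main obstacle, and the adjoint-representation reduction to the centre is the device I would rely on.

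Finally, connectedness follows formally from compactness. The group $\overline{f(\Q_p)}$ is compact abelian and contains the dense divisible subgroup $f(\Q_p)$, divisible because $\Q_p$ is. For each integer $n\geq1$ the continuous map $x\mapsto nx$ has compact, hence closed, image $n\,\overline{f(\Q_p)}$, which contains $n\,f(\Q_p)=f(\Q_p)$ and therefore its closure; so $\overline{f(\Q_p)}$ is divisible. A compact abelian group is divisible if and only if its Pontryagin dual is torsion-free, which is equivalent to its being connected; hence $\overline{f(\Q_p)}$ is connected, completing the proof.
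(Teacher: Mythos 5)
Your overall skeleton is the same as the paper's: reduce to a (virtually) connected Lie group target via Yamabe's theorem, use the no-small-subgroups property to see that the kernel is open and hence that the image is a torsion (indeed divisible $p$-torsion) abelian group, deduce compactness of the closure, and get connectedness from divisibility (your Pontryagin-duality argument is a fine substitute for the paper's observation that $K/K^\circ$ is profinite while $\Q_p$ has no nontrivial finite quotient). The reduction step and the connectedness step are complete and correct.

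The gap is exactly where you say the main obstacle lies: the compactness of $H=\overline{g(\Q_p)}$ in the Lie case is only sketched, and the sketch does not go through as stated. From the relative compactness of $\mathrm{Ad}(A)$ you correctly get that $\mathrm{Ad}(H)$ lies in a compact set $C$, but this does \emph{not} show that ``$H$ is compact modulo its intersection with the centre'': the continuous injection $H/(H\cap Z(L))\to \GL(\mathfrak g)$ with relatively compact image need not be proper or have closed image (compare $\R$ mapping onto a dense one-parameter subgroup of a torus), and $\mathrm{Ad}^{-1}(C)$ contains all of $Z(L)$, which may be non-compact. The ``residual analysis in $Z(L)$'' is likewise not carried out: $Z(L)$ is in general disconnected, so your inductive statement (phrased for connected Lie groups) does not apply to it directly, and $H\cap Z(L)$ is not controlled by the torsion of $A$, since $A\cap Z(L)$ need not be dense in $H\cap Z(L)$. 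The paper sidesteps all of this by invoking the structure theorem for closed abelian subgroups of a connected Lie group: such a subgroup is isomorphic to $\Gamma\times\R^k\times T$ with $\Gamma$ finitely generated abelian and $T$ a torus; then ``$\Q_p$ has no nontrivial finite quotient'' forces $\Gamma=\{0\}$ and the torsion property forces $k=0$, so the closure is a torus. I suggest you either quote that structure theorem (which makes your second paragraph two lines long) or, if you want to keep the adjoint-representation route, supply the missing properness and the handling of the component group of $Z(L)$ explicitly.
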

\begin{proof}
Assume first that $G$ is a virtually connected Lie group. Since $\Q_p$ is divisible, it has no nontrivial finite quotient. Thus $\overline{f(\Q_p)}$ is a closed abelian subgroup of a connected Lie group, so is isomorphic to a product $\Gamma\times \R^k\times T$ for some finitely generated abelian group $\Gamma$ and torus $T$. Invoking again that $\Q_p$ has no nontrivial finite quotient, we find $\Gamma=\{0\}$.  Since $\R^k\times T$ has no small subgroup, the kernel of $f$ must be open in $\Q_p$.  In particular $f(\Q_p)$ is a torsion group, from which we infer that $k=0$. Therefore $\overline{f(\Q_p)}$ is a torus; in particular it is  compact.

Coming back to the general case, we now let $W$ be the maximal compact normal subgroup of $G$, which exists by Proposition~\ref{prop:lcg}(i). Proposition \ref{mainprop} ensures that  $G/W$ is a virtually connected Lie group.
By the special case above, we deduce that, denoting $K=\overline{f(\Q_p)}$, we have $KW/W$ is compact. Hence $K$ is compact as well.
Since $K/K^\circ$ is profinite and $\Q_p$ has no nontrivial finite quotient, $K=K^\circ$, i.e.\ $K$ is connected.
\end{proof}

\begin{rem}
It follows from Pontryagin duality that $\Q_p$ has a continuous homomorphism with dense image into the circle, and also has an injective continuous homomorphism with dense image into the Pontryagin dual $\hat{\Q}$ of $\Q$, which is a connected compact group.
\end{rem}

\begin{lem}\label{qptd}
Every nontrivial continuous homomorphism $f \colon \Q_p \to G$ of $\Q_p$ to a totally disconnected l.c.~group $G$ is proper, and has either a compact open kernel or is an isomorphism to its (closed) image.
\end{lem}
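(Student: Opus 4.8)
The plan is to exploit the very restricted lattice of closed subgroups of $\Q_p$ together with the existence of a compact open subgroup (van Dantzig). First I would record that the only closed subgroups of $(\Q_p,+)$ are $\{0\}$, the compact open subgroups $p^n\Z_p$ ($n\in\Z$), and $\Q_p$ itself. A nonzero closed subgroup $N$ meets $\Z_p$ in a closed subgroup of $\Z_p$, and these are exactly $\{0\}$ and the $p^m\Z_p$; the case $N\cap\Z_p=\{0\}$ would make $N$ discrete, hence trivial (each compact $p^{-j}\Z_p$ meets a discrete subgroup of the torsion-free group $\Q_p$ in a finite, thus trivial, subgroup), contradicting $N\ne\{0\}$. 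So $N\cap\Z_p$ is open and $N$ is open; then $N/p^m\Z_p$ is a subgroup of $\Q_p/p^m\Z_p\cong\Q_p/\Z_p$, the quasicyclic $p$-group, whose only subgroups are finite or everything, giving $N$ compact open or $N=\Q_p$. Applying this to $\ker f$, which is $\ne\Q_p$ since $f$ is nontrivial, yields the stated dichotomy: the kernel is either $\{0\}$ or compact open.

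Next, for properness, I would pick a compact open subgroup $U\le G$ and analyse $f^{-1}(U)$, an open and hence (by the above) either $\Q_p$ or some $p^k\Z_p$ subgroup of $\Q_p$. The key step, which I expect to be the main obstacle, is ruling out $f^{-1}(U)=\Q_p$, i.e. showing the image is not relatively compact. The decisive observation is that $\Q_p$ is \emph{divisible} whereas $U$ is profinite: any homomorphism from a divisible abelian group to a finite group has divisible, hence trivial, image, so the composite of $f$ with each finite quotient $U\to U/N$ vanishes, forcing $f(\Q_p)\subseteq\bigcap_N N=\{e\}$ and contradicting nontriviality. Hence $f^{-1}(U)=p^k\Z_p$ is compact. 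Properness then follows by a routine covering argument: a compact $C\subseteq G$ lies in finitely many cosets $g_iU$, each $f^{-1}(g_iU)$ is a single coset of $f^{-1}(U)=p^k\Z_p$ (or empty), so $f^{-1}(C)$ is a closed subset of a finite union of compact sets, hence compact.

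Finally I would assemble the conclusion. With $f$ proper, if $\ker f=\{0\}$ then $f$ is an injective continuous proper map into a locally compact Hausdorff group, therefore closed, therefore a homeomorphism onto its closed image, so $f$ is an isomorphism onto its (closed) image; and if $\ker f\ne\{0\}$ then by the first step it is compact open. Beyond citing the standard fact that proper maps into locally compact Hausdorff spaces are closed, I expect no difficulty here; the only genuinely delicate point is the divisibility-versus-profinite argument excluding a relatively compact image, with everything else being bookkeeping with the subgroup lattice of $\Q_p$ and van Dantzig's theorem. As a byproduct, properness immediately re-proves that $\overline{f(\Q_p)}$ is noncompact, since otherwise $\Q_p=f^{-1}(\overline{f(\Q_p)})$ would be compact.
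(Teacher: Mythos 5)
Your proof is correct and follows essentially the same route as the paper's: both hinge on van Dantzig's theorem, the observation that $f^{-1}(U)$ is an open (hence closed) subgroup of $\Q_p$ which is either compact or all of $\Q_p$, and the incompatibility of divisibility of $\Q_p$ with profiniteness of $U$ to exclude the latter case. The only cosmetic difference is that the paper first reduces to the case of dense image (so that $f^{-1}(U)=\Q_p$ forces $U=G$), whereas you argue directly with the finite quotients of $U$; you also spell out the subgroup classification and the covering argument for properness that the paper leaves implicit.
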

\begin{proof}
We can suppose that $f$ has dense image, so $G$ is abelian. Let $U$ be a compact open subgroup in $G$. Then $f^{-1}(U)$ is an open subgroup of $\Q_p$. If it is all of $\Q_p$, then $U=G$ and since $U$ is profinite and $\Q_p$ has no nontrivial finite quotient, it follows that $U=\{1\}$. Otherwise, $f^{-1}(U)$ is a compact  open subgroup, so $f$ is proper and in particular has closed image and is the quotient map by some compact subgroup, giving the two possibilities.
\end{proof}

\begin{lem}\label{imap2}
Every continuous homomorphism $f \colon B_p \to G$ of the group $B_p =\Q_p\rtimes_p\Z$ to a totally disconnected l.c.~group $G$ satisfies the following alternative: either $f$ is a topological isomorphism to its closed image, or $f(\Q_p)$ is trivial. 
\end{lem}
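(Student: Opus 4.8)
The plan is to study $f|_{\Q_p}$ first and then control the $\Z$-direction by means of the defining relation of $B_p$. Write $t$ for a generator of the $\Z$-factor, so that $txt^{-1}=px$ for $x\in\Q_p$ inside $B_p$, and hence $f(t)f(x)f(t)^{-1}=f(px)$. I would begin by observing that $N:=\ker(f|_{\Q_p})$ is a closed subgroup of $\Q_p$ invariant under multiplication by $p$ and by $p^{-1}$: if $f(x)=1$, the relation above gives $f(px)=f(p^{-1}x)=1$. The closed subgroups of $(\Q_p,+)$ are exactly $\{0\}$, the subgroups $p^n\Z_p$, and $\Q_p$, and multiplication by $p$ sends $p^n\Z_p$ to $p^{n+1}\Z_p\neq p^n\Z_p$; so the only invariant ones are $\{0\}$ and $\Q_p$. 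Thus either $f(\Q_p)$ is trivial, which is the second alternative, or $f|_{\Q_p}$ is injective, which I assume from now on.

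In the injective case, Lemma~\ref{qptd} tells us that $f|_{\Q_p}$ is proper, i.e.\ a topological isomorphism onto a closed subgroup of $G$. To promote this to properness of $f$ on all of $B_p$, I would fix a compact open subgroup $U\leq G$ (which exists since $G$ is totally disconnected) and aim to show that $f^{-1}(U)$ is compact; a routine covering argument --- any compact subset of $G$ meets only finitely many $U$-cosets, and each nonempty $f^{-1}(gU)$ is a translate of $f^{-1}(U)$ --- then upgrades this to the statement that $f$ is proper. Here $C_0:=f^{-1}(U)\cap\Q_p=(f|_{\Q_p})^{-1}(U)$ is compact, being the preimage of $U$ under the proper map $f|_{\Q_p}$, and it is an open subgroup of $\Q_p$; since $\Q_p$ itself is not compact, $C_0=p^m\Z_p$ for some $m$.

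The crux, and the step I expect to be the main obstacle, is to show that the image of the open subgroup $f^{-1}(U)$ in $B_p/\Q_p\cong\Z$ is trivial, so that $f^{-1}(U)=C_0$ is compact. If it were not, $f^{-1}(U)$ would contain an element $(y,d)$ projecting to some $d\neq0$; a computation identical to the one for $t$ shows that conjugation by $(y,d)$ preserves $\Q_p$ and acts on it as multiplication by $p^{d}$. As $f^{-1}(U)$ is a subgroup containing $(y,d)$ and $C_0$, it contains $p^{\pm d}C_0$, which therefore lie in $f^{-1}(U)\cap\Q_p=C_0$; hence $p^{d}C_0=C_0$, contradicting $p^d(p^m\Z_p)=p^{m+d}\Z_p\neq p^m\Z_p$. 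This is exactly where the Baumslag--Solitar nature of the action is used: the $\Z$-direction cannot fall inside a compact open subgroup because conjugation scales $\Q_p$ by a nontrivial power of $p$, so $C_0$ (a genuine, nonzero $p^m\Z_p$) cannot be conjugation-invariant.

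It follows that $f$ is proper, hence a closed map into the locally compact Hausdorff group $G$; in particular $f(B_p)$ is closed. Injectivity of $f$ then comes for free: $\ker f$ is compact, meets $\Q_p$ trivially, and maps to a compact --- hence finite, hence trivial --- subgroup of the torsion-free discrete group $\Z$, so $\ker f=\{1\}$. A proper continuous injective homomorphism between locally compact groups is a topological isomorphism onto its closed image, which is precisely the first alternative.
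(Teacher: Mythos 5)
Your argument is correct, and its first half (using the $p$-invariance of $\ker(f|_{\Q_p})$ to reduce to the case where $f|_{\Q_p}$ is injective, then invoking Lemma~\ref{qptd} to get that $f|_{\Q_p}$ is a topological isomorphism onto its closed image) is exactly what the paper does. Where you diverge is in promoting this to properness of $f$ on all of $B_p$. The paper introduces the normalizer $\Omega$ of $f(\Q_p)$ in $G$ and the continuous homomorphism $\rho\colon\Omega\to\Z$ recording the factor $p^{\rho(g)}$ by which conjugation scales the Haar measure of $f(\Q_p)$; since $\rho\circ f$ is the identity on the $\Z$-factor, $f(\Q_p)$ is open in $f(B_p)$ and $f$ is proper. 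You instead fix a compact open subgroup $U\leq G$ and show by hand that $f^{-1}(U)$ cannot contain any element projecting to $d\neq 0$ in $\Z$, because conjugation by such an element would force $p^dC_0=C_0$ for the compact open subgroup $C_0=p^m\Z_p$. The two mechanisms are really the same underlying fact (the $\Z$-direction scales $\Q_p$ nontrivially, so it cannot be absorbed into anything compact), but your version is more elementary --- no Haar measure, no normalizer --- at the cost of the explicit covering argument for properness and the separate verification of injectivity of $f$ via the compactness of $\ker f$, both of which you carry out correctly. The paper's version is slicker and makes the splitting $f(B_p)=f(\Q_p)\rtimes f(\Z)$ more visible, which is conceptually closer to how Proposition~\ref{mainprop} later uses the lemma; but nothing in your route is missing or circular.
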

\begin{proof}
If $f(\Q_p)$ is nontrivial, then $f$ is proper in restriction to $\Q_p$ {by Lemma \ref{qptd}}. Since the only compact subgroup of $\Q_p$ that is normal in $B_p$ is the trivial group, it follows from Lemma~\ref{qptd} that the restriction of $f$ to $\Q_p$ is an isomorphism to its closed image.

Let $\Omega$ be the normalizer of $f(\Q_p)$ in $G$; this is a closed subgroup and there is a unique continuous homomorphism $\rho:\Omega\to\Z$ such that the conjugation by $g\in\Omega$ on $f(\Q_p)$ multiplies the Haar measure by $p^{\rho(g)}$. In restriction to $\Z$, we see that $\rho\circ f$ is the identity. It follows that $f(\Q_p)$ is open in $f(B_p)$ and that $f$ is proper.
\end{proof}

\begin{proof}[Proof of Proposition \ref{mainprop}]
Let $f \colon M \to G$ be an arbitrary continuous homomorphism to a compactly generated l.c.~group $G$.
Note that  $G/G^\circ$ is a compactly generated totally disconnected l.c.~group. Therefore it has continuous proper action on a connected graph of degree $d$, for some $d$, by  Proposition~\ref{prop:lcg}(ii).  By Lemma \ref{lem:graphs}, for every $p>d$, the restriction to $\Q_p$ of the $G$-action on this graph has an open kernel. Hence, by Lemma~\ref{imap2}, the action of $\Q_p$ on this graph is trivial for all $p>d$. 

Let $W/G^\circ$ is the (compact) kernel of the $G$-action on the graph. Thus $W$ is connected-by-compact, and contains $f(\Q_p)$ for all $p>d$. In view of Lemma~\ref{qpcc}, this implies that for all $p>d$, the group $\overline{f(\Q_p)}$ is compact and connected. 
\end{proof}

\begin{rem}
Proposition \ref{mainprop} and Lemma \ref{qptd} together show that there is no injective continuous homomorphism from $M$ to any {\em totally disconnected} compactly generated l.c.\ group. On the other hand, it admits an injective continuous homomorphism (not proper!) to a compactly generated l.c.\ group, which can be obtained as follows: start from the dense embedding $\Q\subset\Q_p$; it induces a dense embedding $\Q_p\subset\hat{\Q}$, where $\hat{\Q}$ is the Pontryagin dual of $\Q$ (this is a compact connected group). The multiplication by $p$ is an automorphism of $\Q$ and thus induces a topological automorphism of $\hat{\Q}$, also given by multiplication by $p$. So we obtain a continuous injective homomorphism $M\to \prod_p\hat{\Q}\rtimes Z$, where the $p$th component of $Z$ acts on the $p$th component of the compact group $\prod_p\hat{\Q}$ by multiplication by $p$. By Theorem \ref{t:embab}, the latter group embeds into a compactly generated l.c.\ group.
\end{rem}

\section{Some groups of permutations}

\subsection{A non-embedding criterion}
\begin{prop}\label{noho}
Let $H$ be a topologically simple totally disconnected locally compact group. Assume that $H$ has a compact open subgroup $K$ such that for every $k$, the group $K$ possesses, for some prime $p>k$, a closed subgroup topologically isomorphic to a non-trivial pro-$p$-group (e.g., $K$ has some element of order $p$). Then $H$ admits no nontrivial continuous homomorphism into any compactly generated locally compact group. 
\end{prop}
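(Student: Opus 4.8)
The plan is to verify, for the group $H$, the two conditions of Proposition~\ref{homtri}: that every continuous linear representation $H\to\GL_n(\CC)$ is trivial (condition~(\ref{nlr})), and that every continuous homomorphism of $H$ into a compactly generated totally disconnected l.c.\ group is trivial (condition~(\ref{tota})). Two observations will be used repeatedly. First, topological simplicity means that the (closed, normal) kernel of any continuous homomorphism out of $H$ is either trivial or equal to $H$; so in each case it suffices to produce a single nontrivial element of $H$ lying in the kernel. Second, since $K$ contains nontrivial pro-$p$ subgroups for arbitrarily large primes $p$, it cannot be finite, whence every open subgroup of $K$ is infinite.

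Condition~(\ref{nlr}) should be straightforward. Given a continuous $f\colon H\to\GL_n(\CC)$, I would use that the Lie group $\GL_n(\CC)$ has no small subgroups: there is an identity neighbourhood $U$ containing no nontrivial subgroup. Then $f^{-1}(U)\cap K$ is an identity neighbourhood in the profinite group $K$, hence contains an open subgroup $N$; as $f(N)\subseteq U$ is a subgroup, $f(N)=\{1\}$. Since $N$ is infinite, $\ker f$ is a nontrivial closed normal subgroup of $H$, so $\ker f=H$ and $f$ is trivial.

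The substance is condition~(\ref{tota}). Let $f\colon H\to G$ be continuous with $G$ compactly generated and totally disconnected, and fix an arbitrary identity neighbourhood $V$ in $G$. By Proposition~\ref{prop:lcg}(ii) there is a compact normal subgroup $W\subseteq V$ of $G$ such that $G$ acts continuously by automorphisms on a connected locally finite graph $X$, vertex-transitively and properly through $G/W$, with kernel exactly $W$; let $d$ be the common vertex degree. The idea is to place a pro-$p$ subgroup of $K$ inside a vertex stabiliser and then apply Lemma~\ref{lem:graphs}. Fix a vertex $v$; its stabiliser $G_v$ in $G$ is open, so $K_v:=K\cap f^{-1}(G_v)$ is open of finite index $m$ in $K$. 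Now choose a prime $p>\max(d,m)$ for which $K$ has a nontrivial pro-$p$ subgroup $P$. The index $[P:P\cap K_v]\le m$ is a power of $p$ and is $<p$, hence equals $1$, so $P\subseteq K_v$ and $f(P)$ fixes $v$. As $f(P)$ is a closed pro-$p$ subgroup of the vertex stabiliser $G_v$ with $p>d$, Lemma~\ref{lem:graphs} forces $f(P)$ to act trivially on $X$, i.e.\ $f(P)\subseteq W$.

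It then follows that $f^{-1}(W)$ is a closed normal subgroup of $H$ containing the nontrivial group $P$, hence $f^{-1}(W)=H$ by topological simplicity; that is, $f(H)\subseteq W\subseteq V$. Letting $V$ shrink and using that $G$ is Hausdorff gives $f(H)\subseteq\bigcap_V V=\{1\}$, so $f$ is trivial, completing condition~(\ref{tota}) and hence the proof. I expect the genuine difficulty to lie in the third paragraph: a pro-$p$ subgroup can act on a graph of degree smaller than $p$ with no fixed vertex (a $p$-cycle is the model example), so Lemma~\ref{lem:graphs} cannot be applied to an arbitrary pro-$p$ subgroup. The resolution is exactly to exploit the freedom in the hypothesis, taking $p$ larger than the index of a vertex stabiliser in $K$ so that the whole pro-$p$ group is pinned inside that stabiliser; the final upgrade from ``$f(H)\subseteq W$'' to triviality then comes from letting the neighbourhood $V$ shrink.
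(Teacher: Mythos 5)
Your proof is correct and follows essentially the same route as the paper: reduce via Proposition~\ref{homtri}, kill linear representations using that $\GL_n(\CC)$ has no small subgroups, and kill graph actions by choosing $p$ larger than both the degree $d$ and the index of the vertex stabiliser in $K$ so that a whole nontrivial pro-$p$ subgroup lands in the stabiliser, then invoking Lemma~\ref{lem:graphs}. The only cosmetic differences are that you verify condition~(\ref{tota}) directly through Proposition~\ref{prop:lcg}(ii) and a shrinking-neighbourhood argument rather than via the graph-action sufficient condition stated in Proposition~\ref{homtri}, and that you spell out the index computation $[P:P\cap K_v]<p$ which the paper leaves implicit.
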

\begin{proof}
We use the criteria from Proposition~\ref{homtri} applied to $H$, in which we can replace ``nontrivial" by ``faithful" since $H$ is topologically simple. Thus we only have to show:
\begin{enumerate}
\item\label{snc1} $H$ has no faithful continuous action on any connected graph of bounded degree;
\item\label{snr1} $H$ has no faithful continuous representation into $\GL_n(\CC)$ for any $n$.
\end{enumerate}
The condition (\ref{snr1}) is immediate as $H$ has small nontrivial subgroups whereas $\GL_n(\CC)$ has none. 

Now consider a continuous action of $H$ on a connected graph of bounded degree, say $\le d$. Fix a vertex $x_0$. Then the stabiliser $K_{x_0}$ of $x_0$ in $K$ is open, hence of finite index in $K$. Therefore, the hypothesis implies that $K_{x_0}$, and hence also the full stabiliser $H_{x_0}$, contains a non-trivial pro-$p$-subgroup $L$ for some prime $p>d$. On the other hand, Lemma~\ref{lem:graphs} implies that $L$ acts trivially on the graph, so (\ref{snc1}) holds.
\end{proof}

\subsection{Proof of Theorem~\ref{t:simple}} We here prove the continuous case of Theorem~\ref{t:simple}. The case of abstract homomorphisms is postponed to \S\ref{absh}.

There exist various sources of topologically simple groups satisfying the criterion of Proposition~\ref{noho} and, hence, the conclusions of Theorem~\ref{t:simple}. We shall content ourselves with describing one of them, following a construction due to Akin, Glasner and Weiss in \cite[\S 4]{AGW}; let us point out that those examples were independently obtained as part of a more general construction by Willis \cite[\S 3]{Wil}.  

The construction goes as follows. Fix a sequence $u=(u_k)_{k\ge 0}$ of integers greater than 2. Define the graph $\mathcal{G}=\mathcal{G}(u)$  (non-oriented, without self-loops) as a disjoint union of complete graphs $\mathcal{G}_k$ on $u_k$ elements, whose vertex set is also denoted by $\mathcal{G}(u)$. Let us call the \textbf{height function} $h$ the function $\mathcal{G}\to\N$ mapping any $v\in\mathcal{G}_k$ to $k$. Note that $h$ completely characterizes the graph structure.

Given a self-map $f:\mathcal{G}\to\mathcal{G}$, we call a vertex $v\in\mathcal{G}_u$ {\bf singular} if $h(f(v))\neq v$. We call the self-map $f$ {\bf almost regular} if only finitely many vertices are singular. If $f$ is a permutation, we say that $f$ is an {\bf almost automorphism} of the graph with height function $(\mathcal{G},h)$ if both $f$ and $f^{-1}$ are almost regular. The group of almost automorphisms of $(\mathcal{G},h)$ is denoted by~$S$ (or $S(u)$ if we need specify it). Its subgroup of automorphisms of $(\mathcal{G},h)$, i.e.\ those $f$ preserving the height and the graph structure, is denoted by $K$ (or $K(u)$).

Note that $K$ is naturally isomorphic to the product $\prod_{k=0}^\infty\textnormal{Sym}(u_k)$, which makes it a compact group. 
The group $S$ is endowed with the unique left-invariant topology making $K$ a compact open subgroup; this topology is obviously locally compact and is a group topology, as checked in \cite{AGW}*{\S 4}. It is the union of an increasing union of compact subgroups $(K_n)_{n\ge 0}$, where $K_n$ is defined as those elements in $S$ all of whose singularities and pairs of singularities lie in $\bigcup_{i\le n}\mathcal{G}_i$; note that $K_0=K$ and that $K_n$ is topologically isomorphic to 
\[\textnormal{Sym}(u_{0}+\dots +u_n)\times\prod_{k\ge n+1}\textnormal{Sym}(u_k).\] 

Define $K_n^+$ as its closed subgroup 
\[\textnormal{Alt}(u_{0}+\dots +u_n)\times\prod_{k\ge n+1}\textnormal{Alt}(u_k).\] 
Note that the sequence $(K_n^+)$ is increasing; we define $S^+$, as an abstract group, as the union $\bigcup_{n\ge 0}K_n^+$. Endow it with the left-invariant topology making $K_0^+$ a compact open subgroup. For the same reason as $S$, this is a group topology.

Finally, we define $A<S$ and $A^+< S^+$ as the subgroups consisting of the \textbf{finitary} permutations, i.e. the permutations with finite support. Clearly $A$ is the group of all finitary permutations on the vertices of $\mathcal G$, while $A^+$ is the index two subgroup of $A$ consisting of the alternating finitary permutations. 

\begin{rem}
It is easily seen that   $A^+$  (resp.\ $A$) is dense as a subgroup of  $S^+$ (resp.\ $A$). Moreover $A^+$ is also dense in $S$: indeed, since $A$ is dense, it is enough to show that any transposition $(x\;y)$ in $S$ can be approximated by a sequence of elements of $A^+$. This is indeed the case, using a sequence of double transpositions $(x\;y)(x_k\;y_k)$ with $x_k,y_k$ distinct elements of the same height tending to infinity with $k$.

This implies in particular that the   embedding of $S^+$ into $S$, which is continuous, is not closed: indeed, its image is a proper subgroup which is dense since it contains $A^+$.
 \end{rem}

\begin{rem}In \cite{AGW}, it is shown that $S$ has a dense conjugacy class, under the assumption that $\lim u_k=\infty$. The precise statement of \cite{AGW}*{Theorem 4.4} actually shows that such a conjugacy class can be found inside $S^+$, and also shows that $S^+$ itself admits  a dense conjugacy class.
\end{rem}

Let us show the following related but independent result.

\begin{prop}\label{gsi}
Every non-trivial normal subgroup of $S^+$ (resp.\ $S$) contains $A^+$, and is thus dense. In particular $S^+$ and $S$ are both topologically simple.
\end{prop}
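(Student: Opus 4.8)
The plan is to prove that every non-trivial normal subgroup $N$ of $S^+$ (and similarly of $S$) contains the finitary alternating group $A^+$. Since $A^+$ is dense in $S^+$ (and in $S$, as noted in the preceding remark), this immediately gives density of $N$, and topological simplicity follows because any \emph{closed} normal subgroup would then be all of $S^+$ (resp.\ $S$). So the entire content is the purely algebraic statement that $N \supseteq A^+$.

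\medskip
\noindent\textbf{Reduction to a single non-trivial double transposition.} First I would recall the classical fact that $A^+ = \bigcup_n \textnormal{Alt}(u_0+\dots+u_n)$ is a direct limit of finite alternating groups on arbitrarily large sets, and that for sets of size $\geq 5$ the alternating group is simple and is generated by its $3$-cycles, equivalently by products of two transpositions with disjoint (or overlapping) support. The key reduction is therefore: it suffices to show that $N$ contains a single non-trivial element of $A^+$, i.e.\ a non-trivial finitary even permutation $\sigma$; because then, using that $A^+$ acts on the infinite vertex set $\mathcal{G}(u)$ with enough room to conjugate $\sigma$ into any prescribed finite region, and using that the normal closure of a non-trivial element in an infinite finitary alternating group is the whole group, I can bootstrap from $\sigma$ to a $3$-cycle (by taking commutators $[\sigma, \tau\sigma\tau^{-1}]$ with suitable $\tau \in A^+$ moving part of the support of $\sigma$), and then to all of $A^+$ via the simplicity of the finite alternating pieces. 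The reason $N$ is itself normal and contains $A^+$-conjugates is that $A^+ \subseteq S^+$, so conjugation by finitary even permutations stays inside $N$.

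\medskip
\noindent\textbf{Producing a non-trivial finitary element from an arbitrary non-trivial element of $N$.} This is where the genuine work lies and is the step I expect to be the main obstacle. Take $g \in N$ non-trivial, so $g$ moves some vertex $x$, say to $y = g(x) \neq x$. The goal is to manufacture from $g$, by forming commutators with carefully chosen finitary even permutations $t \in A^+$, a non-trivial \emph{finitary} element of $N$. The standard trick is to choose $t$ supported on a small finite set $F$ (chosen near $x$, using that $\mathcal{G}_k$ has $u_k \geq 3$ vertices of each height so there is always enough room) in such a way that $[g,t] = g t g^{-1} t^{-1}$ has finite support: indeed $g t g^{-1}$ is supported on $g(F)$, so $[g,t]$ is supported on the finite set $F \cup g(F)$ and hence lies in $A$. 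One then arranges $t$ so that $[g,t]$ is moreover non-trivial (this requires choosing $F$ so that $g$ does not merely commute with the chosen $t$, which is possible because $g$ genuinely moves $x$) and, after possibly multiplying by a finitary transposition to correct parity, lies in $A^+$. Since $t \in A^+ \subseteq S^+ \le N$-normalizing group and $g \in N$, the commutator $[g,t]$ belongs to $N$. This yields the required non-trivial element of $N \cap A^+$.

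\medskip
\noindent\textbf{Assembling the conclusion.} Combining the two steps: from $g \in N \setminus \{1\}$ I obtain $1 \neq [g,t] \in N \cap A^+$; the reduction step then promotes this to $A^+ \subseteq N$. The same argument works verbatim for $S$ in place of $S^+$, since $A^+$ is normal in $S$ as well and the commutator computations only use that conjugating elements can be taken in $A^+$. Finally, density of $N$ follows from density of $A^+$ (established in the preceding remark), and topological simplicity of $S^+$ and $S$ is the statement that every non-trivial \emph{closed} normal subgroup equals the whole group, which is immediate once every non-trivial normal subgroup is dense. The delicate point to get right is the commutator bookkeeping in the second step — ensuring simultaneously that $[g,t]$ has finite support, is non-trivial, and has the correct parity — and verifying that the height-function structure never obstructs the availability of the auxiliary vertices needed to define $t$.
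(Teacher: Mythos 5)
Your proposal is correct and follows essentially the same route as the paper: commutate the given non-trivial element of $N$ with a suitable finitary even permutation to obtain a non-trivial element of $N\cap\textnormal{Alt}(X)$ for a finite set $X$ with $|X|\ge 5$, then invoke simplicity of $\textnormal{Alt}(X)$ to conclude $A^+\subseteq N$. (One minor simplification: the parity correction you worry about is never needed, since $[g,t]=(gtg^{-1})t^{-1}$ is automatically even whenever $t\in A^+$, conjugation preserving cycle type.)
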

Note that $S^+$ and $S$ are not abstractly simple, since $A^+$ is a proper dense normal subgroup in both.
\begin{proof}
Let $s$ be a nontrivial element in $S^+$ (resp.\ $S$) and $t\in A^+$. Let $N$ be the normal subgroup generated by $s$. For some $n\ge 2$ (this ensures $u_{0}+\dots+u_n\ge 5$), the element $s$ belongs to $K_n$ and $t$ has support in the finite set $X=\bigcup_{i=0}^n\mathcal{G}_i$ and can thus be viewed as an element of $\textnormal{Alt}(X)$. The commutator $s'$ of $s$ and a suitable element of $\textnormal{Alt}(X)$ is a nontrivial element of $N\cap\textnormal{Alt}(X)$. By simplicity of $\textnormal{Alt}(X)$, it follows that $t\in N$.
\end{proof}
We deduce the following corollary, which implies Theorem~\ref{t:simple}.

\begin{cor}\label{gh}
If $(u_k)$ is unbounded, the groups $S(u)^+$ and $S(u)$ admit no nontrivial continuous homomorphism into any compactly generated l.c.\ group.
\end{cor}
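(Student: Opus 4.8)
The plan is to verify that the groups $S(u)^+$ and $S(u)$ satisfy the hypotheses of Proposition~\ref{noho}, from which the conclusion is immediate. Proposition~\ref{gsi} already tells us that both groups are topologically simple totally disconnected l.c.\ groups, so the only remaining task is to exhibit the required compact open subgroup $K$ together with the family of nontrivial pro-$p$ subgroups indexed by arbitrarily large primes $p$. For $S(u)$ the natural candidate is $K=K_0=\prod_{k\ge 0}\Sym(u_k)$, and for $S(u)^+$ it is $K_0^+=\prod_{k\ge 0}\mathrm{Alt}(u_k)$; both are compact open by construction.

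The heart of the argument is the following elementary observation: for any prime $p$, the symmetric group $\Sym(m)$ (and the alternating group $\mathrm{Alt}(m)$ once $m\ge p$, or $m\ge p$ with $p$ odd so that a $p$-cycle is even) contains an element of order $p$ as soon as $m\ge p$. Since $(u_k)$ is unbounded by hypothesis, for every $k$ there are infinitely many indices $j$ with $u_j>k$; picking any prime $p$ with $k<p\le u_j$, the factor $\Sym(u_j)$ (resp.\ $\mathrm{Alt}(u_j)$) contains a $p$-cycle, which generates a cyclic, hence nontrivial pro-$p$, subgroup of $K$. This is precisely the condition required in Proposition~\ref{noho}: for every $k$ there is a prime $p>k$ such that $K$ has a nontrivial pro-$p$ subgroup. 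A small point to check in the alternating case is that a single $p$-cycle lies in $\mathrm{Alt}(u_j)$ only when it is an even permutation, i.e.\ when $p$ is odd; since we may always take $p>k$ and $p>2$, this causes no difficulty.

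Having verified the hypotheses, I would simply invoke Proposition~\ref{noho} to conclude that neither $S(u)^+$ nor $S(u)$ admits a nontrivial continuous homomorphism into any compactly generated l.c.\ group. Because these groups are topologically simple, any such homomorphism is either trivial or injective with topologically dense-normal image issues aside; but Proposition~\ref{noho} rules out nontriviality outright, so no further analysis is needed.

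I do not expect a serious obstacle here, since the real content has already been distilled into Proposition~\ref{noho} and Proposition~\ref{gsi}. The only place demanding genuine (if minor) care is matching the unboundedness of $(u_k)$ to the quantifier structure of Proposition~\ref{noho}: one must ensure that arbitrarily large primes $p$ actually arise, which follows because unboundedness supplies factors $\Sym(u_j)$ of arbitrarily large degree, and by Bertrand's postulate (or simply the infinitude of primes together with $u_j\to\infty$ along a subsequence) one finds a prime strictly between $k$ and $u_j$ for every $k$. This elementary number-theoretic step is the sole thing that needs explicit justification; everything else is a direct citation.
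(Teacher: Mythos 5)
Your proposal is correct and follows essentially the same route as the paper: both verify the hypotheses of Proposition~\ref{noho}, citing Proposition~\ref{gsi} for topological simplicity and using the unboundedness of $(u_k)$ to produce, inside the compact open subgroup $K_0$ (resp.\ $K_0^+$), elements of arbitrarily large prime order. You are merely more explicit than the paper about locating a prime $p$ with $k<p\le u_j$ and about the parity issue in the alternating case, which the paper compresses into the remark that every identity neighbourhood contains finite symmetric groups of all orders.
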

\begin{proof}
We have to check that the hypotheses of Proposition \ref{noho} are fulfilled. The topological simplicity is ensured by Proposition \ref{gsi}. The local condition also holds, because since $(u_k)$ is unbounded, every neighbourhood of the identity contains finite symmetric groups of all orders and thus contains elements of all possible finite orders.
\end{proof}

\subsection{Abstract homomorphisms of $S$ and $S^+$}\label{absh}
We start with the following converse to Corollary \ref{gh}.

\begin{prop}
If $(u_k)$ is bounded, then $S(u)^+$ and $S(u)$ are both embeddable as open subgroups in compactly generated l.c.\ groups, indeed topologically finitely generated.
\end{prop}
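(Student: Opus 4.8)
The plan is to prove that when the sequence $(u_k)$ is bounded, the groups $S(u)^+$ and $S(u)$ embed as open subgroups of compactly generated (indeed topologically finitely generated) l.c.\ groups. The key observation is that boundedness of $(u_k)$ dramatically simplifies the structure: since each $u_k$ takes values in a finite set of integers, the compact group $K=\prod_{k=0}^\infty \Sym(u_k)$ is a product of finitely many distinct isomorphism types of symmetric groups. The strategy is to exhibit a finite subset of $S(u)$ whose generated subgroup is cocompact, so that compact generation follows; openness is automatic once we embed $S(u)$ as an open subgroup of a larger group built by adjoining a ``shift-like'' automorphism.

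First I would exploit the self-similar structure of $(\mathcal{G},h)$ induced by boundedness. When $(u_k)$ is bounded, one can pass to a constant subsequence or, more usefully, observe that the bounded sequence makes the height structure amenable to a shift. The natural move is to enlarge $S(u)$ by adding an automorphism $\tau$ that shifts the height function, roughly sending level $k$ to level $k+1$ (this requires matching up the sizes, which is where boundedness is essential—only finitely many sizes occur, so after grouping we can arrange a genuine shift or a periodic pattern). Concretely, I would set $G = S(u)\rtimes \langle\tau\rangle$ or a similar extension, where $\tau$ normalizes $K$ and conjugation by $\tau$ implements the height shift. Then $S(u)$ sits as an open subgroup of $G$, since $K$ remains compact open in $G$.

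The next step is to verify compact generation of $G$. The idea is that $K=K_0$ together with the single element $\tau$ should generate a dense (or cocompact) subgroup: conjugating the ``bottom'' symmetric factors of $K$ by powers of $\tau$ produces symmetric groups at every height, and combined with the finitary almost-automorphisms (generated from $K$ and its $\tau$-conjugates) one recovers all of $S(u)$ up to taking closures. Thus the compact set $K\cup\{\tau,\tau^{-1}\}$ topologically generates $G$, giving both compact generation and topological finite generation (finitely many topological generators modulo the compact open $K$). For $S(u)^+$ the same construction applies using the alternating factors $K_0^+$ and the alternating finitary permutations, noting that $\tau$ can be chosen to preserve the alternating structure.

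The main obstacle I anticipate is arranging the shift automorphism $\tau$ rigorously. Because the $u_k$ need not be eventually constant—only bounded—the heights cannot simply be shifted by $k\mapsto k+1$ unless $u_k=u_{k+1}$. I expect the fix is to partition the heights by their value: for each value $m$ appearing infinitely often among the $u_k$ (and there is at least one such $m$ by boundedness and the pigeonhole principle), the levels of size $m$ form an infinite subfamily on which a genuine shift can act, while finitely many ``exceptional'' levels (those whose size-value occurs only finitely often) can be absorbed into the base compact set without affecting compact generation. Handling this bookkeeping—choosing $\tau$ to permute the infinitely-occurring levels cyclically within each size-class while fixing the finite exceptional part, and checking that $\tau$ is an almost-automorphism normalizing $K$—is the delicate point, but it is a finite-case combinatorial construction rather than a conceptual difficulty.
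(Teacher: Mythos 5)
Your setup coincides with the paper's: both adjoin a permutation $\sigma$ of the vertex set preserving the partition into levels and having finitely many orbits (possible exactly because boundedness of $(u_k)$ lets you match up levels of equal size, with a finite exceptional part), form the semidirect product $S\rtimes\langle\sigma\rangle$, and observe that $S$ sits inside it as an open subgroup because $K$ stays compact open. The gap is in the generation step. Since $\sigma$ maps levels to levels, it normalizes $K$, so the subgroup generated by $K\cup\{\sigma,\sigma^{-1}\}$ is exactly $K\rtimes\langle\sigma\rangle$. This is an \emph{open} subgroup of $S\rtimes\langle\sigma\rangle$ (a union of cosets of the compact open subgroup $K$), hence closed, and it has infinite index: it contains only height-preserving almost automorphisms (times powers of $\sigma$), so it misses every transposition of $S$ that swaps two vertices of different heights, and more generally the index of $K$ in $K_n$ tends to infinity. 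In particular it is neither dense nor cocompact, so $K\cup\{\sigma\}$ does not witness compact generation. The parenthetical claim that the finitary almost-automorphisms are ``generated from $K$ and its $\tau$-conjugates'' is where this breaks: all $\tau$-conjugates of $K$ equal $K$ itself.

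The missing ingredient, which is what the paper uses, is the finitary alternating group $A^+$: it was shown earlier in the paper to be dense in both $S$ and $S^+$, and by B.~H.~Neumann's classical observation the abstract group $\langle A^+,\sigma\rangle$ is \emph{finitely generated} when $\sigma$ has finitely many orbits (conjugates of finitely many $3$-cycles by powers of $\sigma$ sweep out all of $A^+$). This finitely generated subgroup is dense in $S\rtimes\langle\sigma\rangle$ and in $S^+\rtimes\langle\sigma\rangle$, which yields topological finite generation directly. Note also that even if your generating claim had held, ``finitely many topological generators modulo the compact open $K$'' would only give compact generation, not the stronger conclusion of topological finite generation asserted in the statement; the density of $\langle A^+,\sigma\rangle$ is what delivers that.
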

\begin{proof}
 Consider a permutation $\sigma$ of $\mathcal{G}$ preserving the partition by the height, with finitely many orbits. Then $\sigma$ normalizes $S$ and $S^+$, as well as $K$ and $K^+$. Therefore the semidirect products $S\rtimes\langle\sigma\rangle$ and $S^+\rtimes\langle\sigma\rangle$ are well-defined. They are totally disconnected locally compact groups containing $S$ (resp.\ $S^+$) as open subgroups. Moreover they act naturally by permutations of $\mathcal{G}$. The subgroup generated by $A^+$ and $\sigma$  is finitely generated (when $\sigma$ is transitive, this group was introduced by B.H. Neumann \cite{Neu}*{p.~127}). Since $A^+$ is dense in $S$ and $S^+$, it follows that $S \rtimes\langle\sigma\rangle $ and $S^+ \rtimes\langle\sigma\rangle$ are topologically finitely generated, hence compactly generated. 
\end{proof}

Using two theorems of S.~Thomas, it is possible to improve Corollary \ref{gh} in case the sequence $(u_k)$ tends to infinity.

\begin{thm}\label{remabs}
Assume that $\lim u_k=\infty$. Then $S(u)^+$ admits no nontrivial {\em abstract} homomorphism into any compactly generated l.c.\ group.
\end{thm}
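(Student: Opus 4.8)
The plan is to upgrade Corollary~\ref{gh}, which already rules out nontrivial \emph{continuous} homomorphisms, by showing that when $\lim u_k=\infty$ every abstract homomorphism of $S(u)^+$ into a compactly generated l.c.\ group is automatically continuous, or at least has the same triviality consequences. The two theorems of S.~Thomas alluded to must be the relevant automatic-continuity and small-index type results for the full finitary alternating group and its relatives: namely that the finitary alternating group $A^+$ (equivalently, an increasing union of finite alternating groups $\textnormal{Alt}(X)$) has uncountable cofinality and the Bergman-type property, or more precisely that a homomorphism out of such a group cannot separate the ``tail'' subgroups topologically. The key point I would exploit is that $S(u)^+=\bigcup_n K_n^+$ is a countable increasing union of the \emph{compact} groups $K_n^+\cong \textnormal{Alt}(u_0+\dots+u_n)\times\prod_{k\ge n+1}\textnormal{Alt}(u_k)$, and that the tail factors $\prod_{k\ge n+1}\textnormal{Alt}(u_k)$ are infinite products of finite alternating groups whose orders tend to infinity.

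First I would set up the following reduction: given an abstract homomorphism $f\colon S(u)^+\to G$ with $G$ compactly generated locally compact, I want to show $f$ is trivial. Since $S(u)^+$ is topologically simple by Proposition~\ref{gsi}, it suffices to find a single nontrivial element of the kernel, for then $\ker f$ is a nontrivial abstract normal subgroup, hence contains the dense $A^+$ (again Proposition~\ref{gsi}), and since $A^+$ is dense while $f$ is \emph{a priori} only abstract I would need to argue that $f$ then kills all of $S(u)^+$ — this last step is where I expect to invoke the structural result that $\ker f\supseteq A^+$ forces $f$ to factor through $S(u)^+/\overline{A^+}=\{1\}$ provided $f$ is continuous, so the crux is really to establish continuity of $f$ first, and only then appeal to Corollary~\ref{gh}. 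Concretely, the target is: any abstract homomorphism $f\colon S(u)^+\to G$ is continuous on the compact open subgroup $K_0^+$, whence continuous everywhere by homogeneity.

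The heart of the argument, and the step where Thomas's theorems enter, is proving automatic continuity on $K_0^+=\prod_{k\ge 0}\textnormal{Alt}(u_k)$. The strategy I would follow is: (i) use the first theorem of Thomas — that an infinite product $\prod_k \textnormal{Alt}(n_k)$ with $n_k\to\infty$ has the property that every abstract homomorphism into a separable (or second countable) group is continuous, via the ``strong uncountable cofinality / automatic continuity'' machinery; by Lemma~\ref{cocse} I may assume $G$ is second countable after passing to a cocompact separable subgroup, and a compactly generated l.c.\ group is second countable up to a compact normal subgroup. (ii) Having continuity of $f|_{K_0^+}$, I extend to continuity of $f$ on all of $S(u)^+$ using that $S(u)^+$ carries the topology making $K_0^+$ compact open, so continuity on an open subgroup gives continuity globally. (iii) Apply Corollary~\ref{gh} to conclude $f$ is trivial. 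I expect the genuinely hard part to be step (i): pinning down exactly which automatic-continuity theorem of Thomas applies and verifying its hypotheses (the role of $\lim u_k=\infty$ is precisely to guarantee the alternating factors are large enough, since $\textnormal{Alt}(n)$ for small $n$ fails simplicity or the requisite cofinality bounds). A technical obstacle I would need to handle carefully is that $G$ need not itself be second countable; I would first quotient $G$ by a compact normal subgroup $W$ furnished by the Kakutani--Kodaira theorem so that $G/W$ is second countable, run the automatic-continuity argument there, and then argue that the residual map into the compact group $W$ is also continuous — using again that a nontrivial kernel propagates to all of $A^+$ and invoking topological simplicity to eliminate the compact residue.

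A cleaner alternative I would keep in reserve, in case the direct automatic-continuity route is awkward to cite, is to argue by contradiction directly with Proposition~\ref{noho}'s mechanism: if $f$ were a nontrivial abstract homomorphism, then by topological simplicity it is injective, and Thomas's second theorem (on the Bergman property / boundedness of $A^+$ or $S(u)^+$) would force the image of a suitable tail subgroup $\prod_{k\ge n+1}\textnormal{Alt}(u_k)$ — a pro-(finite alternating) group containing nontrivial pro-$p$ subgroups for arbitrarily large $p$ — to land in a compact subgroup of $G$, contradicting the graph-action analysis of Lemma~\ref{lem:graphs} and Proposition~\ref{prop:lcg}(ii) exactly as in the proof of Proposition~\ref{noho}. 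Either way, the decisive input is that $\lim u_k=\infty$ lets Thomas's rigidity results convert abstract maps into ones that respect the compact open structure, after which the already-proven continuous case (Corollary~\ref{gh}) finishes the proof.
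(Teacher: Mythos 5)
There is a genuine gap, and it lies exactly where you locate the ``genuinely hard part'': step (i) of your main route. The theorem of Thomas that the paper uses (\cite{Tho}*{Theorem 1.10}) is a \emph{small index property}: every abstract subgroup of at most countable index in $L_k=\prod_{j\ge k}\textnormal{Alt}(u_j)$ is open. This is not an automatic continuity statement for arbitrary homomorphisms into second countable groups, and no such statement is available here: a small index property only controls preimages of \emph{open subgroups} of the target, so it says nothing about homomorphisms into connected targets such as $\GL_d(\CC)$ or a connected Lie quotient of $G$, which have no proper open subgroups at all. Your fallback reduction is also flawed: finding one nontrivial element of $\ker f$ does not finish the proof for an abstract homomorphism, since by Proposition~\ref{gsi} this only gives $\ker f\supseteq A^+$, and Proposition~\ref{qtg} shows that $S^+/A^+\cong\prod_j\textnormal{Alt}(u_j)/\bigoplus_j\textnormal{Alt}(u_j)$ is a large nontrivial group (indeed, for $\liminf u_k<\infty$ it has nontrivial finite quotients, so ``nontrivial kernel'' genuinely does not imply ``trivial map''). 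Everything in your plan therefore hinges on a continuity statement that is stronger than what Thomas proves.

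The paper avoids this by \emph{not} proving continuity of $f$. Instead it applies Proposition~\ref{homtri} to $S^+$ equipped with the \emph{discrete} topology, which reduces the problem to two separate verifications: (a) $S^+$ has no nontrivial action on a connected graph of bounded degree, and (b) $S^+$ has no nontrivial representation into $\GL_d(\CC)$. For (a) the small index property does suffice, because vertex stabilisers have countable index, hence are open in $L_k$; this makes any such action continuous and the continuous case (Corollary~\ref{gh}) applies. For (b) a completely different argument is needed: one uses that the minimal dimension $m_{u_j}$ of a nontrivial representation of $\textnormal{Alt}(u_j)$ tends to infinity --- this is precisely where $\lim u_k=\infty$ enters, not via ``cofinality bounds'' as you suggest --- so that for $j\ge k$ the factors $\textnormal{Alt}(u_j)$ die under $\rho$, making $\ker(\rho|_{L_k})$ dense; combining this with Thomas's second theorem (\cite{Tho}*{Theorem 2.1}, producing a proper countable-index subgroup of $\rho(L_k)$, whose preimage is then open by the small index property but also dense) yields the contradiction. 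Without an argument of this kind for the linear/connected part, your proof does not close.
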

\begin{proof}
We invoke the criterion from Proposition~\ref{homtri}, applied to  the group $S^+=S(u)^+$ endowed with the discrete topology. Thus, it is enough to show that:
\begin{enumerate}
\item\label{snc2} $S^+$ has no nontrivial action on any connected graph of bounded degree;
\item\label{snr2} $S^+$ has no nontrivial representation into $\GL_n(\CC)$.
\end{enumerate}
Both conditions can be checked with the help of the following result. Consider the subgroup $L_k=\prod_{j\ge k}\textnormal{Sym}(u_j)$ of $S^+$. Observe that $S^+$ is generated by the alternating finitary group ${A^+}$ and $L_k$ (because $A^+$ is dense), so it follows from  Proposition~\ref{gsi} that $S^+$ is normally generated by $L_k$. 

Next, we use a result of S.~Thomas \cite{Tho}*{Theorem 1.10} that every  (abstract) subgroup of at most countable index in $L_k$ is open. This immediately shows that every action of $S^+$ on a graph of at most countable valency is continuous, so (\ref{snc2}) follows from the proof of Corollary~\ref{gh} (which, through the proof of Proposition~\ref{noho}, discards the existence of a nontrivial continuous action on any connected graph of bounded valency).

Suppose $S^+$ has a non-trivial linear representation $\rho$ into some $\GL_d(\CC)$ over a field. Let $m_i$ be the dimension of the smallest nontrivial representation of the alternating group $\textnormal{Alt}(i)$; then $m_i$ tends to infinity (it can be shown that $m_i=i-1$ for $i\ge 7$, but a nice argument based on commutation \cite{Abe} gives a completely elementary lower bound $\simeq \sqrt{i}$). Fix $k$ so that $m_{u_j}>d$ for all $j\ge k$. Since $L_k$ normally generates $S^+$, the representation $\rho$ is non-trivial in restriction to $L_k$. By another result of S.~Thomas \cite{Tho}*{Theorem 2.1}, any non-trivial subgroup of $\GL_d(\CC)$ admits a subgroup of at most countable index. Apply this to $\rho(L_k)$ and let $H$ be its inverse image in $L_k$. By the choice of $k$, the kernel of $\rho$ contains the direct sum $\bigoplus_{j\ge k}\textnormal{Alt}(u_j)$, which is dense. So $H$ is dense; on the other hand the first-mentioned result of Thomas implies that $H$ is open. We thus reach a contradiction.
\end{proof}

We have seen in  Corollary~\ref{gh} that unboundedness of the sequence $(u_k)$ was sufficient to guarantee the absence of non-trivial homomorphisms of $S(u)$ or $S(u)^+$ to a compactly generated locally compact group. In contrast to this, the next result shows that  the hypothesis that $(u_k)$ tends to infinity in Theorem~\ref{remabs} cannot be weakened to the unboundedness of the sequence. 

\begin{prop}\label{qtg}
The quotient of the group $S$ (resp.\ $S^+$) by its subgroup of finitary permutations can be identified with 
\[\prod_{j}\textnormal{Sym}(u_j)/\bigoplus_j\textnormal{Sym}(u_j)\quad\textnormal{\Big(resp.}\quad \prod_{j}\textnormal{Alt}(u_j)/\bigoplus_j\textnormal{Alt}(u_j)\;\;\Big).\] In particular,
\begin{enumerate}
\item\label{gg} $S$ has an uncountable abstract abelianization and has   proper subgroups of finite index (such subgroups are necessarily dense). 
\item\label{gp} $S^+$ has proper subgroups of finite index if and only if $\liminf u_k<\infty$. It has a non-trivial  (resp.\ uncountable) abelianization if and only if $\liminf u_k\le 4$.
\end{enumerate}
\end{prop}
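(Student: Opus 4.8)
The plan is to make the quotient map concrete through the ``germ at infinity'' of an almost automorphism, and then read off every assertion from the structure of the resulting product‑modulo‑sum groups, which I abbreviate $Q_{\Sym}=\prod_j\Sym(u_j)/\bigoplus_j\Sym(u_j)$ and $Q_{\textnormal{Alt}}=\prod_j\textnormal{Alt}(u_j)/\bigoplus_j\textnormal{Alt}(u_j)$. First I would construct the identification $S/A\cong Q_{\Sym}$. Since $S=\bigcup_n K_n$ with $K_n\cong\Sym(u_0+\dots+u_n)\times\prod_{k>n}\Sym(u_k)$, any $f\in S$ lies in some $K_n$ and hence preserves each block $\mathcal G_k$ for $k>n$, acting there by a permutation $\sigma_k(f)\in\Sym(u_k)$. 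Sending $f$ to the class of $(\sigma_k(f))_k$ in $Q_{\Sym}$ is well defined, as the finitely many coordinates $k\le n$ (and the choice of $n$) only affect finitely many entries, which vanish modulo $\bigoplus_j\Sym(u_j)$. I would check it is a homomorphism by noting that for $f,g\in K_n$ and $k>n$ both preserve $\mathcal G_k$, so $\sigma_k(fg)=\sigma_k(f)\sigma_k(g)$; it is surjective already on $K=\prod_j\Sym(u_j)$; and its kernel consists of the elements acting trivially on all but finitely many $\mathcal G_k$, which is exactly the finitary group $A$. The same recipe restricted to $S^+=\bigcup_n K_n^+$ gives $S^+/A^+\cong Q_{\textnormal{Alt}}$, the kernel now being the finitary even permutations $A^+$.

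For part (\ref{gg}) concerning $S$ I would post‑compose with signs. The product of sign maps $\prod_j\Sym(u_j)\to\prod_j\Z/2\Z$ carries $\bigoplus_j\Sym(u_j)$ into $\bigoplus_j\Z/2\Z$, hence descends to a surjection $Q_{\Sym}\twoheadrightarrow(\prod_j\Z/2\Z)/(\bigoplus_j\Z/2\Z)$. Composing with $S\twoheadrightarrow Q_{\Sym}$ exhibits an \emph{uncountable} elementary abelian $2$‑group as a quotient of $S$ (the index set is infinite and every $u_j\ge 3$), so $S$ has uncountable abelianization, and each of the uncountably many index‑two subgroups of the target pulls back to a proper finite‑index subgroup of $S$. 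Such a subgroup $H$ is dense: its normal core is a proper finite‑index, hence nontrivial, normal subgroup of $S$, so by Proposition~\ref{gsi} it contains $A^+$, which is dense in $S$.

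For the abelianization in part (\ref{gp}) I would use that $A^+$ is simple, hence perfect, and contained in $[S^+,S^+]$; therefore the quotient map induces $S^{+\,\mathrm{ab}}\cong Q_{\textnormal{Alt}}^{\,\mathrm{ab}}$. Writing $P=\prod_j\textnormal{Alt}(u_j)$ and $D=\bigoplus_j\textnormal{Alt}(u_j)$, one has $Q_{\textnormal{Alt}}^{\,\mathrm{ab}}=P/([P,P]\,D)$, and the identification $[P,P]=\prod_j[\textnormal{Alt}(u_j),\textnormal{Alt}(u_j)]$ follows from the fact that $\textnormal{Alt}(u_j)$ has commutator width bounded independently of $j$ (width $1$ for $u_j\ge 5$ by Ore, and a uniform bound for the two groups $\textnormal{Alt}(3),\textnormal{Alt}(4)$), so that a coordinatewise product of commutators is a bounded product of commutators in $P$. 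Hence $Q_{\textnormal{Alt}}^{\,\mathrm{ab}}\cong(\prod_j\textnormal{Alt}(u_j)^{\mathrm{ab}})/(\bigoplus_j\textnormal{Alt}(u_j)^{\mathrm{ab}})$. As $\textnormal{Alt}(u_j)^{\mathrm{ab}}$ equals $\Z/3\Z$ when $u_j\in\{3,4\}$ and is trivial when $u_j\ge 5$, this group is trivial when only finitely many $u_j\le 4$, and is the uncountable group $(\prod_{u_j\le4}\Z/3\Z)/(\bigoplus_{u_j\le4}\Z/3\Z)$ otherwise; since $u_j\ge 3$, the condition ``infinitely many $u_j\le 4$'' is exactly $\liminf u_k\le 4$, which proves the dichotomy (nontrivial $\Leftrightarrow$ uncountable $\Leftrightarrow\liminf u_k\le4$).

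Finally, for the finite‑index statement in (\ref{gp}) I would reduce, via Proposition~\ref{gsi}, to the same question for $Q_{\textnormal{Alt}}$: a proper finite‑index subgroup of $S^+$ has a proper finite‑index normal core, which contains $A^+$ and thus corresponds to a proper finite‑index subgroup of $Q_{\textnormal{Alt}}$, and conversely. If $\liminf u_k<\infty$, then some value $m\ge 3$ occurs for $k$ in an infinite set $I$; projecting $Q_{\textnormal{Alt}}$ onto $\prod_{k\in I}\textnormal{Alt}(m)/\bigoplus_{k\in I}\textnormal{Alt}(m)$ and then applying the homomorphism attached to a non‑principal ultrafilter on $I$ (which kills the finitely supported elements) yields a surjection onto the nontrivial finite group $\textnormal{Alt}(m)$, so proper finite‑index subgroups exist. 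The reverse implication is the main obstacle: when $u_k\to\infty$ I must show $Q_{\textnormal{Alt}}$ has \emph{no} nontrivial finite quotient. Here I would invoke the theorem of S.~Thomas (the alternating‑group form of \cite{Tho}*{Theorem 1.10}, applicable precisely because $u_k\to\infty$) that every countable‑index subgroup of $\prod_j\textnormal{Alt}(u_j)$ is open, hence contains a tail $\prod_{j\ge m}\textnormal{Alt}(u_j)$. A finite‑index $N\trianglelefteq P$ lifting a finite‑index subgroup of $Q_{\textnormal{Alt}}$ contains $D$ by construction and contains such a tail by Thomas; since $D$ and any tail together generate all of $P$ (split each element along the cut at level $m$), this forces $N=P$, giving the desired contradiction and completing the equivalence.
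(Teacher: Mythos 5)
Your proposal is correct, and its skeleton matches the paper's: the identification of $S/A$ with $\prod_j\Sym(u_j)/\bigoplus_j\Sym(u_j)$ (the paper obtains it from $S=AK$, hence $S/A\cong K/(A\cap K)$, of which your germ-at-infinity map is an explicit version), the signature argument for (\ref{gg}), and the non-principal ultrafilter trick when $\liminf u_k<\infty$ are all the same. You diverge in two places, both legitimately. For the absence of proper finite-index subgroups when $u_k\to\infty$, the paper simply quotes Theorem~\ref{remabs} (a finite quotient would give a nontrivial linear representation); you instead argue directly that the core of such a subgroup contains $A^+$ by Proposition~\ref{gsi}, hence lifts to a finite-index subgroup of $P=\prod_j\textnormal{Alt}(u_j)$ containing $D=\bigoplus_j\textnormal{Alt}(u_j)$, which by Thomas's openness theorem contains a tail, and $D$ together with any tail generates $P$. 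This uses only \cite{Tho}*{Theorem 1.10} rather than the full strength of Theorem~\ref{remabs} (whose proof also needs \cite{Tho}*{Theorem 2.1}), and is self-contained modulo that citation. For the abelianization, the paper only proves perfectness when eventually $u_j\ge 5$ (generation by perfect subgroups) and exhibits some uncountable abelian quotient when $\liminf u_k\le 4$; you compute $S^{+\,\mathrm{ab}}$ exactly as $\bigl(\prod_{u_j\le 4}C_3\bigr)/\bigl(\bigoplus_{u_j\le 4}C_3\bigr)$, using perfectness of $A^+$ to reduce to $Q_{\textnormal{Alt}}$ and the uniform commutator width $1$ of alternating groups to identify $[P,P]$ coordinatewise. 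That is a sharper statement than what the paper records (and it quietly corrects the paper's passing mention of $C_2^{\N}/C_2^{(\N)}$ as a possible abelian quotient of $S^+$: since $\textnormal{Alt}(3)^{\mathrm{ab}}\cong\textnormal{Alt}(4)^{\mathrm{ab}}\cong C_3$, the quotient is a $C_3$-group in both cases). All steps check out.
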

\begin{proof}
The first statement follows from the fact that $S=AK$, so $S/A=AK/A=K/(A\cap K)$; the argument for $S^+$ is similar. 

Denoting by $C_p$ the cyclic group of order $p$, the signature map induces a canonical surjection 
\[\prod_{j}\textnormal{Sym}(u_j)/\bigoplus_j\textnormal{Sym}(u_j)\to C_2^{\N}/C_2^{(\N)},\]
proving that $S$ has an uncountable abelianization, and, by taking suitable quotient, admits subgroups of index 2, proving (\ref{gg}). (Observe that $S^+$ has index 2 in the kernel of the surjection $S\to C_2^{\N}/C_2^{(\N)}$.) 

Concerning $S^+$, first assume that $\lim u_k=\infty$. If $S^+$ has nontrivial finite quotients, then it admits a nontrivial linear representation, contradicting Theorem~\ref{remabs}. Also observe that if for some $k$, $u_j\ge 5$ for $j\ge k$, then since $S^+$ is generated by the perfect groups $A^+$ and $L_k$, it is also perfect.

Conversely, assume $\liminf u_k<\infty$. 
Picking a subsequence on which $u_k$ is constant, say equal to $m\ge 3$, we obtain a surjective homomorphism 
$$S^+\to \textnormal{Alt}(m)^\N/\textnormal{Alt}(m)^{(\N)}.$$ 
Taking the limit with respect to a non-principal ultrafilter yields a nontrivial finite quotient. Also if $\liminf u_k\le 4$, then $S^+$ admits either $\textnormal{Alt}(3)^\N/\textnormal{Alt}(3)^{(\N)}$ or $\textnormal{Alt}(4)^\N/\textnormal{Alt}(4)^{(\N)}$ as a quotient, and thus admits either $C_2^{\N}/C_2^{(\N)}$ or $C_3^{\N}/C_3^{(\N)}$ as an uncountable abelian quotient.
\end{proof}

\section{Proof of Theorem \ref{opcl}}\label{qha}

Our first example is the following: let $\hat{\Q}$ be the Pontryagin dual of the discrete additive group $\Q$. So $\hat{\Q}$ is a connected, torsion-free compact group, and by Pontryagin duality, its automorphism group can be identified with the group of automorphisms of the group $\Q$, namely the multiplicative group $\Q^\times$. The first example is then
$$H_1=\hat{\Q}\rtimes\Lambda,$$ where $\Lambda$ is an arbitrary infinitely generated subgroup of $\Q^\times$ endowed with the discrete topology (recall that $\Q^\times$ is isomorphic to the product of its subgroup of order 2 and of a free abelian group of countable rank). 

Our second example is very similar in the construction. Fix a prime $p$. Recall that the group $\Z_p^\times$ is uncountable (it is known to be isomorphic to the product of a finite abelian group with $\Z_p$). Let $\Lambda$ be a countable, infinitely generated subgroup of $\Z_p^\times$, and endow $\Lambda$ with the discrete topology. Our second example is $$H_2=\Z_p\rtimes\Lambda.$$

A third example is $$H_3=\R\rtimes\Lambda,$$ where $\Lambda$ is a countable infinitely generated subgroup of $\R$; this is a Lie group. 

\begin{prop}
If an l.c.~group $G$ admits an isomorphic copy of $H_i$ ($i=1,2,3$) 
as an open subgroup, then it admits a discrete quotient which is an infinitely generated abelian group. In particular $G$ is not compactly generated. 
\end{prop}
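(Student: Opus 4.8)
The plan is to produce, in each case, a continuous homomorphism from $G$ onto a discrete abelian group into which $\Lambda$ embeds; this suffices because $\Lambda$ is infinitely generated while a subgroup of a finitely generated abelian group is finitely generated, so any abelian group containing $\Lambda$ is itself infinitely generated. Throughout write $H=K\rtimes\Lambda$ for the relevant copy, with $K=\hat\Q,\Z_p,\R$ respectively. First I would record the two structural facts I intend to use: since $H$ is topologically the product $K\times\Lambda$ with $\Lambda$ discrete, the subgroup $K$ is open in $H$, hence open in $G$; and $\Lambda$ acts on $K$ faithfully through a subgroup of the \emph{abelian} group $\Aut(K)$, which is $\Q^\times$, $\Z_p^\times$ and $\R^\times$ respectively (for $H_3$, $\Lambda$ embeds into $\R^\times=\Aut(\R)$ by multiplication).

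For $H_1$ and $H_3$ I would exploit that $K$ is the identity component: indeed $G^\circ=H^\circ=K$, since $H\cong K\times\Lambda$ with $K$ connected and $\Lambda$ discrete, so $K$ is normal in $G$. Conjugation then yields a homomorphism $\psi\colon G\to\Aut(G^\circ)$ whose kernel is $C_G(G^\circ)$; this kernel contains the open subgroup $K$ (as $K$ is abelian) and is therefore open. Thus $G/C_G(G^\circ)$ is discrete, it is abelian because $\Aut(G^\circ)$ is abelian, and the restriction of $\psi$ to $\Lambda$ is injective by faithfulness with image the infinitely generated subgroup $\Lambda\subset\Aut(K)$. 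Hence $G/C_G(G^\circ)$ is the desired quotient.

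The hard part is $H_2$, where $K=\Z_p$ is compact open but typically \emph{not} normal in $G$, and where $G$ is totally disconnected so there is no identity component to fall back on. The remedy I propose is to replace the conjugation action by a commensuration homomorphism. Since $\Z_p$ is compact open, all its $G$-conjugates are commensurable with it, so for each $g$ the map $c_g\colon x\mapsto gxg^{-1}$ restricts to a continuous isomorphism between the finite-index subgroups $\Z_p\cap g^{-1}\Z_pg$ and $\Z_p\cap g\Z_pg^{-1}$ of $\Z_p$; as every continuous isomorphism between finite-index closed subgroups of $\Z_p$ is multiplication by a scalar, this assigns to $g$ a well-defined $\gamma_g\in\Q_p^\times$. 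I expect $g\mapsto\gamma_g$ to define a homomorphism $\Phi\colon G\to\Q_p^\times$ with $\Phi(\lambda)=\lambda$ for $\lambda\in\Lambda$ and with $\ker\Phi\supseteq C_G(\Z_p)\supseteq\Z_p$, hence open. Then $G/\ker\Phi$ is again discrete abelian and contains $\Lambda$, so it is infinitely generated. The points to check carefully are that $\Phi$ is well defined independently of the chosen finite-index subgroup and is multiplicative, and that $\Lambda\cap\ker\Phi=\{1\}$ (true because multiplication by $\lambda\neq1$ is the identity on no finite-index subgroup of $\Z_p$); this replacement of conjugation by the virtual-automorphism action is the main obstacle, precisely because $\Z_p$ fails to be normal.

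Finally, the clause ``in particular $G$ is not compactly generated'' is immediate: a continuous surjection from a compactly generated l.c.\ group onto a discrete group has finitely generated image, since the image of a compact generating set is finite and generating. Hence no compactly generated l.c.\ group can have an infinitely generated discrete quotient, which completes the argument.
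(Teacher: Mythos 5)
Your proposal is correct and follows essentially the same route as the paper: for $H_1$ and $H_3$ the conjugation action on the open identity component $G^\circ=K$ gives a continuous homomorphism into the abelian group $\Aut(K)$ with open kernel and image containing $\Lambda$, and for $H_2$ one passes to the virtual-automorphism (commensuration) homomorphism $G\to\Q_p^\times$ exactly as in the paper, which likewise verifies directly that conjugation on a suitable finite-index subgroup $p^n\Z_p$ is multiplication by a well-defined scalar. The only cosmetic difference is that you extract the scalar from the isomorphism $\Z_p\cap g^{-1}\Z_p g\to\Z_p\cap g\Z_p g^{-1}$ rather than from $p^n\Z_p$, which is the same argument.
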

\begin{proof}
Since the identity component $H^\circ=\hat{\Q}$ is open in $H$, it is open in $G$ and thus $H^\circ=G^\circ$ is open and normal in $G$. Thus the action by conjugation of $G$ on $G^\circ=\hat{\Q}$ defines a continuous homomorphism $\phi:G\to\Q^\times$, which is the identity on $\Lambda$ and trivial on $G^\circ$. So $\Ker(\phi)$ is open and the image of $\phi$ contains $\Lambda$ and is thus an infinitely generated abelian group. This concludes the proof of the proposition for $H_1$. The proof for $H_3$ is similar.

Let us now deal with $H_2$; since $\Z_p$ is not connected, the previous argument does not work. The subgroup $\Z_p$ being compact and open in $G$, it is commensurated by $G$; its abstract topological commensurator is the group $\Q_p^\times$, so that $G$ naturally admits a continuous homomorphism to $\Q_p^\times$, whose kernel contains $\Z_p$. Let us check it directly: observe that if $g\in G$, then there exists $n$ such that $g(p^n \Z_p)g^{-1}\subset\Z_p$, and then there exists a unique $\lambda(g)$, not depending on $n$, such that the conjugation by $g$, in restriction to $p^n\Z_p$, coincides with the multiplication by $\lambda(g)$. An immediate verification shows that $\lambda$ is a homomorphism. In restriction to $\Lambda$, the map $\lambda$ is the identity, and $\Ker(\lambda)$ is open in $G$ since it contains $\Z_p$. Thus $G/\Ker(\lambda)$ is a discrete abelian group containing $\Lambda$ and therefore fails to be finitely generated.
\end{proof}

In order to conclude the proof of Theorem~\ref{opcl}, it remains to show that those examples admit embeddings as closed subgroups into some compactly generated l.c.\ groups. 

For $H_2$, such an embedding can be obtained as follows. First embed $\Lambda$ into a finitely generated group $\Gamma$ (this is possible by Lemma~\ref{Zgamma}). Thus $\Lambda$ can be diagonally embedded as a discrete subgroup into $\Z_p^\times \times\Gamma$. This embedding extends to a continuous embedding of $\Z_p\rtimes\Lambda$ into $(\Z_p\rtimes\Z_p^\times)\times\Gamma$. This second embedding is continuous and injective; moreover it is proper since it is a discrete embedding in restriction to the cocompact subgroup $\Lambda$.

An obvious similar construction works for the third example $\R\rtimes\Lambda$. However, both embeddings rely on the fact that $\Lambda$ is contained in a compactly generated  l.c.~group of automorphisms of the normal subgroup ($\Z_p$ or $\R$). For $H_1$, we use the following topological version of a classical theorem of Krasner and Kaloujnine \cite{KK}.

\medskip
Recall that given two groups $K$ and $Q$, the {\bf unrestricted wreath product} $K \barwr  Q$ is the semidirect product $K^Q\rtimes Q$, where $Q$ acts on $K^Q$ by shifting on the left, namely $q\cdot f(r)=f(q^{-1}r)$. Assume now that $K$ is a topological group, and $Q$ is a discrete group. Then the product topology on $K^Q\times Q$ makes $K\barwr Q$ a topological group.

\begin{thm}\label{embwr}
For every l.c.~group $H$ that is an extension of a compact normal subgroup $K$ by a discrete quotient $Q$, there is an embedding of $H$ as a closed subgroup the unrestricted wreath product $K \barwr  Q=K^Q\rtimes Q$. 
\end{thm}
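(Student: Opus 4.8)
The plan is to carry out the classical Krasner--Kaloujnine embedding and then check that it behaves well topologically, compactness of $K$ being precisely what forces the image to be closed. Fix the quotient map $\pi\colon H\to Q$ and choose a set-theoretic section $s\colon Q\to H$ with $s(1)=1$; since $Q$ is discrete, no continuity requirement on $s$ is needed. For $h\in H$ I define $f_h\in K^Q$ by
$$f_h(q)=s(q)^{-1}\,h\,s(\pi(h)^{-1}q),$$
which takes values in $K=\ker\pi$ because $\pi(f_h(q))=q^{-1}\pi(h)\pi(h)^{-1}q=1$. Then set $\phi(h)=(f_h,\pi(h))\in K\barwr Q$.

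The first step is the purely algebraic verification that $\phi$ is an injective homomorphism. Writing out the product in $K\barwr Q$, so that the first coordinate of $\phi(h)\phi(h')$ is $q\mapsto f_h(q)\,f_{h'}(\pi(h)^{-1}q)$, one checks the cocycle-type identity $f_{hh'}(q)=f_h(q)\,f_{h'}(\pi(h)^{-1}q)$; this amounts to the telescoping $s(q)^{-1}h\,s(\pi(h)^{-1}q)\cdot s(\pi(h)^{-1}q)^{-1}h'\,s(\pi(hh')^{-1}q)=s(q)^{-1}(hh')\,s(\pi(hh')^{-1}q)$, and yields $\phi(hh')=\phi(h)\phi(h')$. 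Injectivity follows by evaluating at $q=1$: since $s(1)=1$ we have $f_h(1)=h\,s(\pi(h)^{-1})$, so $\phi(h)=\phi(h')$ forces first $\pi(h)=\pi(h')$ and then $h=h'$. These steps are routine and I would keep them brief.

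Next I would address the topology. Because $Q$ is discrete, $K$ is a compact \emph{open} normal subgroup of $H$, so $H$ is the disjoint union of the open cosets $gK$. On each such coset $\pi$ is constant, hence $h\mapsto f_h(q)=s(q)^{-1}h\,s(\text{const})$ is continuous there for every fixed $q$; thus $h\mapsto f_h$ is continuous into $K^Q$ with the product topology and $\phi$ is continuous. To see that $\phi$ is a homeomorphism onto its image, note that $\phi|_K$ is a continuous injection from the compact group $K$ into the Hausdorff group $K^Q$, hence a homeomorphism onto the compact set $\phi(K)$; moreover $\phi(K)=\phi(H)\cap(K^Q\times\{1\})$ is open in $\phi(H)$ because $K^Q\times\{1\}$ is open in $K\barwr Q$. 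Translating by the homeomorphisms $\phi(g)$ shows that $\phi$ is a local homeomorphism, hence an open map onto $\phi(H)$, so it is an embedding.

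The main point, and where compactness is essential, is closedness of the image. Here $K^Q$ is compact by Tychonoff's theorem, so $\phi(K)$ is compact. The cosets of the open normal subgroup $K^Q\times\{1\}$ in $K\barwr Q$ are clopen and partition the group, indexed by $Q$; for each such coset $V$ the intersection $\phi(H)\cap V$ is either empty or a left translate $\gamma\,\phi(K)$ of the compact set $\phi(K)$, hence compact and a fortiori closed in $V$. Since closedness is a local property relative to an open (indeed clopen) cover, $\phi(H)$ is closed in $K\barwr Q$. I expect this last step to be the crux: the argument collapses if $K$ is merely locally compact, for then neither $\phi(K)$ nor the coset pieces need be compact, and the image could fail to be closed.
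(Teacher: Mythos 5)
Your proposal is correct: all the algebraic identities check out (the cocycle identity $f_{hh'}(q)=f_h(q)f_{h'}(\pi(h)^{-1}q)$ is consistent with the left-shift action $q\cdot f(r)=f(q^{-1}r)$ used to define $K\barwr Q$), the continuity argument correctly exploits that $K=\ker\pi$ is open, and the closedness argument via Tychonoff plus the clopen partition of $K\barwr Q$ into the cosets $K^Q\times\{q\}$ is sound. The route, however, differs from the paper's in its packaging. You choose a set-theoretic section $s$ at the outset and write down the classical Krasner--Kaloujnine $1$-cocycle explicitly, then verify everything by direct computation. The paper instead constructs a canonical space $X$ of functions $f\colon Q\to H$ with $\pi\circ f$ a left translation, carrying commuting actions of $K\barwr Q$ (simply transitive) and $H$ (free); the embedding $F_x$ is only pinned down after choosing a basepoint $x\in X$, so it is canonical up to inner automorphisms, and sections appear merely as particular points of $X$. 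For the crux, the paper proves that the $H$-action on $X$ is \emph{proper} (using that $\theta$ takes finitely many values on a compact set and that $\pi$ is proper because $K$ is compact), which yields that $F_x$ is a proper map and hence a closed embedding; you instead verify directly that $\phi$ is open onto its image and that the image meets each clopen coset in a translate of the compact set $\phi(K)$. Your version is more elementary and self-contained; the paper's version isolates properness as the structural reason for closedness and makes the construction basepoint-independent. Both correctly identify compactness of $K$ as the essential hypothesis.
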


\begin{proof}[Proof of Proposition~\ref{p:compact-by-disc}]
Let $\Gamma$ be a finitely generated group containing $Q$. By Theorem~\ref{embwr}, there is a closed embedding $H  \leqslant K\barwr Q$. By the definition of the unrestricted wreath product, the embedding $Q  \leqslant \Gamma$ extends to a closed embedding $ K\barwr Q  \leqslant K\barwr \Gamma$.
\end{proof}

\begin{proof}[Proof of Theorem \ref{embwr}]
We begin by a general construction, not relying on the group topologies. Let $\pi:H\to Q$ be a surjective group homomorphism with kernel $K$. We will define, in a canonical way, a set $X=X(\pi)$ with commuting actions of $K\barwr Q$ and $H$, such that the $(K\barwr Q)$-action is simply transitive and the $H$-action is free. Given a choice of $x\in X$, this yields a unique injective homomorphism $F_x:G\to K\barwr Q$, mapping $g\in G$ to the unique element $s=F_x(g)\in K\barwr Q$ such that $gx=s^{-1}x$. The latter homomorphism depends on the choice of $x$, but is canonically defined up to post-composition by inner automorphisms of $K\barwr H$.

The set $X$ is defined to be the set of functions $f: Q\to H$ such that $\pi\circ f$ is a left translation of $Q$, by some element $\theta(f)$. Note that $X\neq\emptyset$, indeed it contains the set of set-theoretic sections $Q\to H$ of $\pi$, which are the elements $f$ in $X$ such that $\theta(f)=1$.

Let $K^Q$ act on $X$ as follows. If $u\in K^Q$, define $$u\cdot f(q)=f(q)u(q)^{-1}.$$ If $f\in X$ then $u\cdot f\in X$ and $\theta(u\cdot f)=\theta(f)$, because
$$\pi\circ(u\cdot f)(q)=\pi(f(q)u(q)^{-1})=\pi(f(q))=\theta(f)q.$$
This is clearly an action.

Besides, let $Q$ act on $X$ as follows. If $r\in Q$, define $$r\cdot f(q)=f(r^{-1}q).$$
Note that $\pi(r\cdot f(q))=\pi(f(r^{-1}q))=\theta(f)r^{-1}q$, so $r\cdot f\in X$ and $\theta(r\cdot f)=\theta(f)r^{-1}$. 

We next claim that these actions define an action of the semidirect product $K\barwr Q$ on $X$. To verify the claim, we need to show that for all $f \in X$, $u \in K^Q$  and $r \in Q$, we have
$$v \cdot f = r \cdot (u \cdot (r^{-1} \cdot f)),$$
where $v \in K^Q$ is defined as $v \colon  q \mapsto u(r^{-1} q)$. In other words we have $v= rur^{-1}  $ in the wreath product $K\barwr Q$. 
Given $q \in Q$, we have  
$$v\cdot f(q)=f(q)v(q)^{-1}=f(q)u(r^{-1}q)^{-1}. $$
On the other hand, we have
$$ \begin{array}{rcl}
r \cdot (u \cdot (r^{-1} \cdot f))(q) &= & (u \cdot (r^{-1} \cdot f))(r^{-1}q) \\
& = & (r^{-1} \cdot f)(r^{-1}q)u(r^{-1}q)^{-1} \\
& = &   f(r r^{-1}q)u(r^{-1}q)^{-1}\\
& = &  f(q)u(r^{-1}q)^{-1},
\end{array}$$
so that $v \cdot f(q) = r \cdot (u \cdot (r^{-1} \cdot f))(q)$ for all $q \in Q$, as desired. 

A straightforward verification shows that the action of $K\barwr Q$ on $X$ that has just been defined is simply transitive.

Finally, the $H$-action on $X$ is defined as follows: if $g\in H$ and $f$ is a function $Q\to H$, define $$g\cdot f(q)=gf(q).$$ If $f\in X$ and $g\in H$ and $q\in Q$, we have
$$(\pi\circ (g\circ f))(q)=\pi((g\circ f)(q))=\pi(gf(q))=\pi(g)\pi(f(q))=\pi(g)\theta(f)q, $$
so $g\cdot f\in X$ and $\theta(g\cdot f)=\pi(g)\theta(f)$.

We immediately see that the action of $H$, which is free, commutes with both the action of $K^Q$ and the action of $Q$, and thus commutes with the action of $K\barwr Q$.  So we have, for $x\in X$, an injective homomorphism $F_x:H\to K\barwr Q$ as defined above.

\medskip
Assume now that $K$ is a topological group, while $Q$ is still assumed to be discrete, so that $K\barwr Q$ is a topological group. Endow $H^Q$ with the product topology, and endow $X\subset H^Q$ with the topology induced by inclusion, namely the pointwise convergence topology. It is straightforward that the actions of $K\barwr Q$ and $H$ on $X$ are continuous and that orbital maps $K\barwr Q\to X$ are homeomorphisms. It follows that the homomorphism $F_x$ is continuous.

Let us now assume that $K$ is compact, so that $K\barwr Q$ and $X$ are both locally compact. (As soon as $Q$ is infinite, the converse holds, namely:  $K\barwr Q$ is locally compact if and only if $K$ is compact.) We claim that  the homomorphism $F_x$ is then proper. Checking this amounts to verify that the $H$-action on $X$ is proper.
Indeed, let $U_1,U_2$ be non-empty compact subsets of $X$ and let us check that $I=\{g\in H:gU_1\subset U_2\}$ has compact closure. By compactness, $\theta(U_2)$ is finite, and therefore we deduce that $\pi(I)$ is finite. Since $\pi$ is proper, it follows that $I$ has compact closure.
\end{proof}

\begin{bibdiv}
\begin{biblist}
\bib{Abe}{article}{
    AUTHOR = {Ab{\'e}rt, Mikl{\'o}s},
     TITLE = {Representing graphs by the non-commuting relation},
   JOURNAL = {Publ. Math. Debrecen},
    VOLUME = {69},
      YEAR = {2006},
    NUMBER = {3},
     PAGES = {261--269},
}
\bib{Abels}{article}{
  title={Specker-Kompaktifizierungen von lokal kompakten topologischen Gruppen},
  author={Abels, H.},
  journal={Mathematische Zeitschrift},
  volume={135},
  number={4},
  pages={325--361},
  year={1974},
  publisher={Springer}
}
\bib{AGW}{article}{
  title={Generically there is but one self homeomorphism of the Cantor set},
  author={Akin, E.},
  author={Glasner, E.},
  author={Weiss, B.},
  journal={Transactions of the American Mathematical Society},
  volume={360},
  number={7},
  pages={3613--3630},
  year={2008},
  publisher={Providence, RI [etc.] American Mathematical Society.},
}

\bib{Gri}{inproceedings}{
author={Grigorchuk, R.},
title={Just infinite  branch groups},
booktitle={New horizons in pro-$p$ groups},
series={Progr. Math.},
volume={184},
publisher={Birk{\"a}user},
date={2000},
pages={121--179},
}
\bib{Hall}{article}{
  author={Hall, P.},
  title={Finiteness conditions for soluble groups},
  journal={Proc. London Math. Soc.(3)},
  volume={4},
  number={16},
  pages={419--436},
  date={1954}
}
\bib{HNN}{article}{
   author={Higman, G.},
   author={Neumann, B. H.},
   author={Neumann, H.},
   title={Embedding theorems for groups},
   journal={J. London Math. Soc.},
   volume={24},
   date={1949},
   pages={247--254},
}
\bib{KK}{article}{
   author={Krasner, M.},
   author={Kaloujnine, L.},
   title={Produit complet des groupes de permutations et probl\`eme
   d'extension de groupes. III},
   language={French},
   journal={Acta Sci. Math. Szeged},
   volume={14},
   date={1951},
   pages={69--82},
}
\bib{Mon}{book}{
   author={Monod, N.},
   title={Continuous bounded cohomology of locally compact groups},
   series={Lecture Notes in Mathematics},
   volume={1758},
   publisher={Springer-Verlag},
   place={Berlin},
   date={2001},
   pages={x+214},
}
\bib{MZ}{book}{
   author={Montgomery, Deane},
   author={Zippin, Leo},
   title={Topological transformation groups},
   publisher={Interscience Publishers, New York-London},
   date={1955},
}
\bib{Neu}{article}{
  title={Some remarks on infinite groups},
  author={Neumann, B.H.},
  journal={J. London Math. Soc.},
  volume={1},
  number={2},
  pages={120--127},
  year={1937},
}
\bib{NN}{article}{
   author={Neumann, B. H.},
   author={Neumann, H.},
   title={Embedding theorems for groups},
   journal={J. London Math. Soc.},
   volume={34},
   date={1959},
   pages={465--479},
}
\bib{Pes}{article}{
   author={Pestov, V. G.},
   title={On compactly generated topological groups},
   language={Russian},
   journal={Mat. Zametki},
   volume={40},
   date={1986},
   number={5},
   pages={671--676, 699},
}

\bib{Schupp}{article}{
   author={Schupp, P. E.},
   title={Embeddings into simple groups},
   journal={J. London Math. Soc. (2)},
   volume={13},
   date={1976},
   number={1},
   pages={90--94},
}
\bib{Tho}{article}{
  title={Infinite products of finite simple groups II},
  author={Thomas, S.},
  journal={Journal of Group Theory},
  volume={2},
  pages={401--434},
  year={1999},
}

\bib{Wil}{article}{
  title={Compact open subgroups in simple totally disconnected groups},
  author={Willis, G.A.},
  journal={Journal of Algebra},
  volume={312},
  number={1},
  pages={405--417},
  year={2007},
}
\end{biblist}
\end{bibdiv}
\end{document}